\newtheorem{theorem}{Theorem}[section]
\newtheorem{corollary}{Corollary}[theorem]
\newtheorem{lemma}[theorem]{Lemma}
\theoremstyle{definition}
\theoremstyle{remark}
\newtheorem{remark}[theorem]{Remark}
\numberwithin{equation}{section}
\begin{document}

\title[The NEF cone of Hilbert scheme in the Grassmannian]{The NEF cone of the Hilbert Scheme of hypersurfaces in the Grassmannian}

%    Information for first author
\author{See-Hak Seong}
\address{Department of Mathematics,Statistics and CS, University of Illinois at Chicago, Chicago, Illinois 60607}
\email{sseong3@uic.edu}

\begin{abstract}
We show that when $d \geq 3$ and $m>2$, the Nef cone of the Hilbert scheme $Hilb_{P_{d,m}(T)}(G(k,n))$ is a cone spanned by 6 classes in general case, where $P_{d,m}(T)=\binom{T+m}{m}-\binom{T+m-d}{m}$.
\end{abstract}

\maketitle

\section*{Contents}
~\\
1. Introduction \hfill 1 \\
2. Preliminaries \hfill 2\\
3. Nef cone of $Hilb_{P_{d,m}(T)}(G(k,n))$ for $m> 2,d\geq 3$ \hfill 4\\
References

\section{Introduction}

Let $P_{d,m}(T)=\binom{T+m}{m}-\binom{T+m-d}{m}$. This is the Hilbert polynomial of a hypersurface of degree $d$ in $\mathbb{P}^m$. Let $F(k_1,k_2;n)$ denote the two step flag variety that parameterizes flags $W_1 \subset W_2 \subset V$, where $\dim(W_i)=k_i$ and $\dim(V)=n$. Let $\mathcal{S}$ denote the tautological bundle over a 2-step flag variety $F(k_1,k_2;n)$ is defined by taking each fiber as a projectivized quotient space $\pi^{-1}(W_1,W_2):= \mathbb{P}(W_2/W_1).$

 In \cite{3}, we showed that the Hilbert scheme $Hilb_{P_{d,m}(T)}(G(k,n))$ is a projective bundle over disjoint union of flag varieties.

\begin{theorem}
For $d \geq 3, 1<k<n-1$ and $m\geq 2$, let's define $P_{d,m}(T) = \binom{T+m}{m}-\binom{T+m-d}{m}$. Let $\mathcal{S}$ be a tautological bundle over the 2-step flag variety. Then the $Hilb_{P_{d,m}(T)}(G(k,n))$  is the projective bundle $\mathbb{P}(Sym^d \mathcal{S}^*)$ over the following flag varieties
\begin{center}
$\begin{array}{ll}
2< m \leq k \text{ case} & : F(k-1,k+m;n) \sqcup F(k-m,k+1;n) \\
k < m \leq n-k \text{ case} & : F(k-1,k+m;n) \\
n-k< m \text{ case} & : \emptyset \\
\end{array}$
\end{center}
where $\mathcal{S}$ is the tautological bundle over a 2-step flag variety $F(k_1,k_2;n)$.
\end{theorem}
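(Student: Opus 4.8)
The plan is to identify $\mathrm{Hilb}_{P_{d,m}(T)}(G(k,n))$, with $G(k,n)\subseteq\mathbb{P}^N$ the Plücker embedding ($N=\binom{n}{k}-1$), with the relative parameter space of degree-$d$ hypersurfaces in the fibres of the tautological $\mathbb{P}^m$-bundles lying over the flag varieties named in the statement. The decisive input is a rigidity statement: every closed subscheme $Z\subseteq G(k,n)$ with Hilbert polynomial $P_{d,m}(T)$ (with respect to $\mathcal{O}_{\mathbb{P}^N}(1)$) is a degree-$d$ hypersurface inside a uniquely determined linear subspace $\Pi\cong\mathbb{P}^m$ with $\Pi\subseteq G(k,n)$. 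To obtain this I would first record the telescoping identity $P_{d,m}(T)=\sum_{j=0}^{d-1}\binom{T+m-1-j}{m-1}$: this is the Macaulay--Gotzmann normal form with $d$ summands, so the Gotzmann number of $P_{d,m}$ equals $d$, and Gotzmann's regularity theorem shows the saturated ideal of $Z$ in $\mathbb{P}^N$ is $d$-regular. Since $P_{d,m}$ is exactly the Hilbert polynomial of a degree-$d$ hypersurface in $\mathbb{P}^m$ and $m\ge 2$, $d\ge 3$, a Castelnuovo-type extremality argument (comparing the coefficient of $T^{m-2}$, the ``sectional genus'', against its maximum for a nondegenerate dimension-$(m-1)$ degree-$d$ subscheme of a larger projective space), combined with the $d$-regularity and, where needed, the constraint that $Z$ sits inside the quadrically cut out variety $G(k,n)$, forces $\langle Z\rangle$ to be an $m$-plane $\Pi$. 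Then the equation $f$ of the codimension-one part of $Z$ in $\Pi$ has degree $d'\le d$, and matching Hilbert polynomials of $Z$ and $V(f)$ gives $d'=d$ and $I_{Z/\Pi}=(f)$, i.e.\ $Z=V(f)$.

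Next I would promote $\Pi$ into the Grassmannian. Because $d\ge 3$ the ideal $I_{Z/\Pi}=(f)$ contains no nonzero quadratic form, hence every quadric of $\mathbb{P}^N$ vanishing on $Z$ restricts to the zero form on $\Pi$, i.e.\ vanishes identically on $\Pi$; as $G(k,n)$ is cut out by the Plücker quadrics this forces $\Pi\subseteq G(k,n)$. This is precisely where the hypothesis $d\ge 3$ enters.

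It remains to classify the $m$-planes contained in $G(k,n)$ and to assemble the bundle. It is classical that a linear $\mathbb{P}^r\subseteq G(k,n)$ in the Plücker embedding is of exactly one of two types: either all of its members $W$ share a common $(k-1)$-dimensional subspace $A$, so it equals $\{W:A\subseteq W\subseteq B\}\cong\mathbb{P}(B/A)$ with $B=\sum_W W$ of dimension $k+r$; or all of its members lie in a common $(k+1)$-dimensional subspace $B$, so it equals $\{W:A\subseteq W\subseteq B\}$ with $A=\bigcap_W W$ of dimension $k-r$ --- and for $r\ge 2$ these two families are disjoint. (One proves this by reducing to lines, which are pencils $A\subseteq W\subseteq B$ with $\dim A=k-1$, $\dim B=k+1$, and inducting on $r$.) Taking $r=m$: the first type exists iff $k+m\le n$ and its members are parameterized by $F(k-1,k+m;n)$, the second exists iff $m\le k$ and is parameterized by $F(k-m,k+1;n)$; after reducing to the case $k\le n-k$ via $G(k,n)\cong G(n-k,n)$ this yields exactly the three cases of the statement. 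Over each flag variety the tautological flag $\mathcal{W}_1\subseteq\mathcal{W}_2\subseteq V$ produces a rank-$(m+1)$ bundle $\mathcal{S}=\mathcal{W}_2/\mathcal{W}_1$ whose projectivization --- respectively the projectivization of its dual, for the second type --- is the universal $\mathbb{P}^m\subseteq G(k,n)$, and degree-$d$ hypersurfaces in these fibres are the fibres of $\mathbb{P}(\mathrm{Sym}^d\mathcal{S}^*)\to F$ (up to the $\mathbb{P}(-)$-convention in play). The universal hypersurface is an effective divisor in $\mathbb{P}(\mathrm{Sym}^d\mathcal{S}^*)\times_F\mathbb{P}(\mathcal{S})\subseteq\mathbb{P}(\mathrm{Sym}^d\mathcal{S}^*)\times G(k,n)$ meeting no fibre of the first projection, hence flat over $\mathbb{P}(\mathrm{Sym}^d\mathcal{S}^*)$ with Hilbert polynomial $P_{d,m}(T)$ (a linear subspace carries $\mathcal{O}_{\mathbb{P}^N}(1)$ as $\mathcal{O}_{\mathbb{P}^m}(1)$, so its Plücker Hilbert polynomial is that of a degree-$d$ hypersurface in $\mathbb{P}^m$), and this induces a morphism $\mathbb{P}(\mathrm{Sym}^d\mathcal{S}^*)\to\mathrm{Hilb}_{P_{d,m}(T)}(G(k,n))$, bijective on points by the rigidity step. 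To upgrade it to an isomorphism I would compute $H^0(N_{Z/G(k,n)})$ from $0\to N_{Z/\Pi}\to N_{Z/G(k,n)}\to N_{\Pi/G(k,n)}|_Z\to 0$, check the relevant $H^1$ groups vanish (using $H^1(\mathcal{O}_Z(d))=0$ and that $\Pi$ moves in the smooth family $F$), and verify $\dim H^0(N_{Z/G(k,n)})$ equals the dimension of the bundle at every point; then both sides are smooth of the same dimension and the morphism is an étale bijection, hence an isomorphism.

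The principal obstacle is the rigidity step --- proving that $Z$ must be a degree-$d$ hypersurface in a linear $\mathbb{P}^m$, i.e.\ bounding $\dim\langle Z\rangle$ from above by $m$ and excluding embedded points, lower-dimensional components, and non-saturated structure. This is where Gotzmann's persistence/regularity, a Castelnuovo-type bound, and possibly the ambient condition $Z\subseteq G(k,n)$ are genuinely needed; by contrast the classification of linear subspaces of the Grassmannian, the projective-bundle bookkeeping, and the normal-bundle and smoothness computation are either classical or routine.
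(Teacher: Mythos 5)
The paper you are working from does not actually prove this theorem: it is quoted verbatim from \cite{3}, and the only other relevant remark in the text is the sentence recording that \cite{3} also shows ``every closed subscheme of Hilbert polynomial $P_{d,m}(T)$ is a degree $d$ hypersurface of [an] $m$-dimensional linear projective space.'' So your proposal has to be judged against what any proof must accomplish. Your architecture is the right one, and three of its four pieces are sound as described: the observation that $d\ge 3$ forces $(I_{Z/\Pi})_2=0$, hence every Pl\"ucker quadric vanishes on the span $\Pi$ and $\Pi\subseteq G(k,n)$, is exactly the correct mechanism and the right place where $d\geq 3$ enters; the two-family classification of linear subspaces of $G(k,n)$ (common $(k-1)$-dimensional subspace versus common $(k+1)$-dimensional overspace, disjoint families for $m\ge 2$) is classical and correctly matched to $F(k-1,k+m;n)$ and $F(k-m,k+1;n)$; and the universal-family, flatness, and tangent-space bookkeeping needed to upgrade the bijection to an isomorphism is routine. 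The Gotzmann computation is also correct: $P_{d,m}(T)=\sum_{j=0}^{d-1}\binom{T+m-1-j}{m-1}$ does give Gotzmann number $d$ and hence $d$-regularity of the saturated ideal.

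The genuine gap is the rigidity step, which you have accurately labeled the principal obstacle but have not closed, and which is the entire content of the theorem (it is the main result of \cite{3}, not a lemma one can wave at). The Castelnuovo sectional-genus comparison you propose requires $Z$ to be integral and its general linear curve section to be integral and nondegenerate in $\mathbb{P}^{r-m+2}$, $r=\dim\langle Z\rangle$; for an arbitrary closed subscheme with Hilbert polynomial $P_{d,m}(T)$ --- possibly reducible, non-reduced, or carrying embedded or lower-dimensional components --- neither Bertini-type nondegeneracy of general sections nor the Castelnuovo genus bound is available, and $d$-regularity by itself gives no lower bound on $h^0(\mathcal{I}_Z(1))$. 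Concretely, nothing in your outline excludes a scheme supported on a union such as a degree-$(d-1)$ hypersurface in one $m$-plane together with a residual degree-one piece, with the lower-order coefficients of the Hilbert polynomial adjusted by embedded or non-reduced structure; ruling out all such configurations is precisely what must be proved. Until that step is supplied --- whether by a Macaulay/Gotzmann extremality argument on the full Hilbert function, by a primary-decomposition analysis of the top-dimensional part and its residual, or by exploiting the ambient Grassmannian --- the proof is incomplete.
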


And we also show that every closed subscheme of Hilbert polynomial $P_{d,m}(T)$ is a degree $d$ hypersurface of $m$-dimensional linear projective space. 

In this paper, we computed the Nef cone of the Hilbert scheme by using two divisors $D_X$ and $D_Y$ of the Hilbert scheme $Hilb_{P_{d,m}(T)}(G(k,n))$, which are defined as follows. Let $X:= \Sigma_m(F_{\bullet})$ and $Y:= \Sigma_{1,\cdots,1}(F'_{\bullet})$ be $m$-codimensional special Schubert subvarieties of $G(k,n)$, where $F_{\bullet}, F'_{\bullet}$ are complete flags of $\mathbb{C}^n$. Then,
\begin{center}
$D_X := \{C \in Hilb_{P_{d,m}(T)}(G(k,n)) \rvert C \cap X \neq 
\emptyset \}$

$D_Y := \{C \in Hilb_{P_{d,m}(T)}(G(k,n)) \rvert C \cap Y \neq 
\emptyset \}$.
\end{center}

\begin{theorem}
Let $d \geq 3, n \geq3, 1< k \leq \lfloor \frac{n}{2} \rfloor$, and $P_{d,m}(T) = \binom{T+m}{T} - \binom{T+m-d}{m}$ for all integer $2< m \leq k$. The Nef cone of the Hilbert scheme $Hilb_{P_{d,m}(T)}(G(k,n))$ is a cone spanned by the pull-backs of generators of the Nef cone of flag varieties, $[D_X]$, and $[D_Y]$.
\end{theorem}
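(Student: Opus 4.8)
The plan is to exploit the projective bundle description of Theorem 1.1 and turn the statement into an intersection computation on each connected component. First I would reduce to a single projective bundle: by Theorem 1.1, for $2<m\le k$ the Hilbert scheme $Hilb_{P_{d,m}(T)}(G(k,n))$ is the disjoint union of $\mathbb P(Sym^d\mathcal S^*)$ over $F(k-1,k+m;n)$ and $\mathbb P(Sym^d\mathcal S^*)$ over $F(k-m,k+1;n)$, where in each case $\mathcal S$ has rank $m+1$ and $\mathbb P(\mathcal S)$ has $\mathbb P^m$-fibers realizing the ambient linear $\mathbb P^m\subset G(k,n)$. Since the N\'eron--Severi space and the Nef cone of a disjoint union are the direct sum and the product of those of the components, it is enough to study one bundle $\pi\colon\mathbb P(Sym^d\mathcal S^*)\to F(a,b;n)$, keeping in mind that $D_X$ and $D_Y$ restrict to divisors on each piece. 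For such a bundle $N^1=\pi^*N^1(F(a,b;n))\oplus\mathbb Z\,\xi$ with $\xi=c_1(\mathcal O_{\mathbb P}(1))$, and $N^1(F(a,b;n))$ has rank $2$ with Nef cone spanned by the pullbacks $H_1,H_2$ of the ample generators under the two Grassmannian projections; so each component has a rank-$3$ N\'eron--Severi space, and the six classes of the statement are $H_1,H_2$ for each of the two flag factors together with $[D_X]$ and $[D_Y]$ (in degenerate parameter ranges a factor becomes a Grassmannian and the count drops, which is why the theorem speaks of the general case).

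Next I would compute the classes of the incidence divisors. Writing $[D_X]=a_X\,\xi+\pi^*\ell_X$ and $[D_Y]=a_Y\,\xi+\pi^*\ell_Y$, the coefficients are pinned down by restricting to test curves. Fixing a point of $F(a,b;n)$ fixes the ambient $\mathbb P^m\subset G(k,n)$, and a general pencil of degree-$d$ hypersurfaces in it is a curve $f$ with $\xi\cdot f=1$ and $\pi^*(\,\cdot\,)\cdot f=0$; the number of members of the pencil that meet $X=\Sigma_m(F_\bullet)$ (resp.\ $Y=\Sigma_{1,\dots,1}(F'_\bullet)$) equals $a_X$ (resp.\ $a_Y$), which is $d$ times the length of the zero-cycle cut on the fixed $\mathbb P^m$ by the corresponding Schubert variety. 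The divisorial parts $\ell_X,\ell_Y\in N^1(F(a,b;n))$ are then recovered by letting the flag move along a line in each of the two Grassmannian projections while carrying a fixed hypersurface, i.e.\ by intersecting with the natural sections of $\pi$ over those lines.

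Finally I would determine $\overline{NE}$ of the component and dualize. The Mori cone should be generated by the fiber line $f$ together with the minimal-degree lifts $C_1,C_2$ of the two extremal rays of $\overline{NE}(F(a,b;n))$, using the explicit form of $Sym^d\mathcal S^*$ and the known Mori cone of the two-step flag variety; covering families of these curves show there are no others. Computing the $3\times3$ intersection matrix of $\{f,C_1,C_2\}$ with $\{\xi,\pi^*H_1,\pi^*H_2\}$, hence with $[D_X]$ and $[D_Y]$, one checks that $\overline{NE}^\vee$ is exactly the cone spanned by $\pi^*H_1,\pi^*H_2,[D_X],[D_Y]$; this shows at once that these four classes are nef and that nothing else is, so the two cones agree, and reassembling the two components yields the six generators.

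The main obstacle is the class computation: controlling how a degree-$d$ hypersurface carried by a moving linear $\mathbb P^m\subset G(k,n)$ meets the special Schubert varieties $\Sigma_m(F_\bullet)$ and $\Sigma_{1,\dots,1}(F'_\bullet)$, and tracking how the resulting coefficients depend on $k,m,n$; this is also where the degeneracies that force the ``general case'' hypothesis appear, since for special values some of the six classes land on the boundary spanned by the others. A further point needing care is proving that $f,C_1,C_2$ genuinely generate $\overline{NE}$ rather than merely lying inside it, which I would settle through explicit covering families or through the contraction morphisms of $\pi$ and of the flag variety.
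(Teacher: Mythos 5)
Your strategy is essentially the paper's: split the Hilbert scheme into the two projective bundles over $F(k-1,k+m;n)$ and $F(k-m,k+1;n)$, note each has Picard rank $3$ with two generators pulled back from the flag variety, and test the three candidate generators against three curves --- the pencil inside a fixed fiber $\mathbb{P}^m$ (your $f$, the paper's $\gamma$) and lifts of the two dual Schubert curves of the flag variety obtained by carrying the $d$-th power of the $\mathbb{P}^{m-1}$ common to the rotating family of spans (your $C_1,C_2$, the paper's $\gamma',\gamma''$). The paper phrases this as producing a dual basis of curves and invoking its Lemma 2.3 rather than writing $[D_X]$ in the basis $\{\xi,\pi^*H_1,\pi^*H_2\}$, but the content is the same.

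Two concrete corrections. First, the coefficient of $\xi$ in $[D_X]$ is \emph{not} ``$d$ times the length of the zero-cycle cut on the fixed $\mathbb{P}^m$''; it is that length itself, which here is $1$. Indeed $[\Sigma_m]=\sigma_m$ is Poincar\'e dual to the class $\sigma_{n-k,\dots,n-k,n-k-m}$ of the span, so $X$ meets a general span in a single reduced point $q$, and the fiber of $D_X$ over that span is the hyperplane $\{C : q\in C\}$ in the $\mathbb{P}^{\binom{m+d}{d}-1}$ of hypersurfaces; a general pencil meets a hyperplane once, independently of $d$ (passing through a point is one linear condition on the pencil, not a degree-$d$ condition). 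With the spurious factor of $d$ the ray of $[D_X]$ comes out wrong and your dual-cone check would not close. Second, the step you flag as needing care --- that $f,C_1,C_2$ really generate $\overline{NE}$ --- is precisely what the paper's Lemma 2.3 is meant to supply, and a dual basis of irreducible curves alone does not force this; what actually closes the argument is verifying that the three divisor classes are nef (the pullbacks are nef automatically, and $[D_X]$ is nef because its $GL_n$-translates form a connected family no curve can lie in entirely), after which the $\delta_{ij}$ intersection matrix pins the nef cone down. Your covering-family plan is the right way to make that explicit. A last caution: a general span of the first family is disjoint from $Y=\Sigma_{1,\dots,1}$ (the product $\sigma_{1,\dots,1}\cdot\sigma_{n-k,\dots,n-k,n-k-m}$ vanishes), so you should not assume $D_Y$ restricts to a divisor on $\mathcal{M}_1$; use $D_X$ only on the first component and $D_Y$ only on the second, as the paper does.
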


In \S 2, we give preliminaries of the N\'{e}ron-Severi group and Nef cone. In \S 3, we suggest lemmas for finding generators of Nef cone and prove Theorem 1.2.

\section{Preliminaries}

 We recall Theorems 2.1 and 2.2 from Theorem 5.9 in \cite{1}.

\begin{theorem}
Let $F(a_1,\cdots,a_r;n)$ be a $r$-step flag variety. This flag variety admits $r$ canonical projections $\pi_i : F(a_1,\cdots,a_r;n) \to G(a_i,n)$ for each $i=1,\cdots,r$. Then, the N\'{e}ron-Severi group of the flag variety is generated by the pull-backs of the first Schubert classes of each Grassmannian $G(a_i,n)$.
\end{theorem}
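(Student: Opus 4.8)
The plan is to identify the Néron--Severi group of $F := F(a_1,\dots,a_r;n)$ with its second integral cohomology, use the Schubert cell decomposition to pin down an explicit $\mathbb{Z}$-basis, and then match that basis with the pull-back classes coming from the projections $\pi_i$. First I would recall that $F$ is a smooth projective rational homogeneous space: writing $F = G/P$ with $G = SL_n(\mathbb{C})$ and $P$ the parabolic subgroup stabilizing a fixed reference flag of type $(a_1,\dots,a_r)$, the Bruhat decomposition exhibits $F$ as a disjoint union of affine Schubert cells indexed by the minimal-length representatives of $W/W_P$, where $W = S_n$. The closures of these cells are the Schubert varieties, and their classes form a $\mathbb{Z}$-basis of $H^{*}(F,\mathbb{Z})$. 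Since every cell has even real dimension, the cohomology is concentrated in even degrees and is torsion-free; consequently the cycle class map is an isomorphism, homological, algebraic and numerical equivalence all coincide on divisors, and we obtain that the Néron--Severi group satisfies
\begin{equation*}
NS(F) \cong \mathrm{Pic}(F) \cong H^2(F,\mathbb{Z}),
\end{equation*}
a free abelian group generated by the codimension-one Schubert classes.

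Next I would count and name these Schubert divisors. The codimension-one Schubert varieties correspond to the simple reflections $s_{a_1},\dots,s_{a_r}$ that lie outside $W_P$ --- one for each index $a_j$ where the flag type jumps --- so there are exactly $r$ of them and $\mathrm{rank}\,NS(F) = r$. It then remains to show that the pull-backs $\pi_j^{*}\sigma_1^{(j)}$ of the first Schubert (Plücker hyperplane) classes $\sigma_1^{(j)}$ on the Grassmannians $G(a_j,n)$ are precisely these $r$ generators. Each $\pi_j$ is a surjective, generically smooth flag-bundle projection that carries the Schubert stratification of $F$ compatibly onto that of $G(a_j,n)$; the scheme-theoretic preimage of the Schubert divisor on $G(a_j,n)$ is the Schubert divisor on $F$ attached to $s_{a_j}$, taken with multiplicity one. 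Hence $\pi_j^{*}\sigma_1^{(j)}$ equals the $j$-th Schubert divisor class, and the $r$ pull-backs constitute the full $\mathbb{Z}$-basis of $NS(F)$.

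I expect the last step --- proving that each pull-back hits the correct Schubert divisor with multiplicity exactly one, rather than some positive multiple --- to be the main technical point. The cleanest way to settle it is to pair the candidate classes against the dual basis of Schubert curve classes: $F$ carries $r$ distinguished rational curve classes $C_1,\dots,C_r$, the classes of lines in the $r$ elementary directions of the flag, and a direct intersection computation shows that the matrix $\bigl(\pi_j^{*}\sigma_1^{(j)}\cdot C_i\bigr)_{i,j}$ is triangular with $1$'s on the diagonal. This simultaneously proves that the pull-back classes are linearly independent and that they generate the full lattice, completing the identification with the Schubert divisor basis and hence the theorem. Equivalently, one may invoke the representation-theoretic description of $\mathrm{Pic}(G/P)$ as the lattice spanned by the fundamental-weight line bundles $L_{\omega_{a_j}}$ and observe that $\pi_j^{*}\mathcal{O}_{G(a_j,n)}(1) = L_{\omega_{a_j}}$.
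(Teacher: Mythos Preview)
Your argument is essentially correct and follows the standard route for computing the Picard group of a partial flag variety: identify $NS(F)$ with $H^2(F,\mathbb{Z})$ via the Bruhat cell decomposition, observe that the $r$ codimension-one Schubert classes form a basis, and then check that the pulled-back Pl\"ucker hyperplane classes $\pi_j^*\sigma_1^{(j)}$ coincide with these. One small sharpening: with the natural choice of dual Schubert curves $C_i$ (fix all steps of the flag except the $a_i$-th, which moves in a pencil), the intersection matrix $\bigl(\pi_j^*\sigma_1^{(j)}\cdot C_i\bigr)$ is in fact the identity, not merely unipotent triangular, since $\pi_j$ contracts $C_i$ to a point whenever $j\neq i$. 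This makes the final step immediate.

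As for comparison with the paper: the paper does not prove this statement at all. Theorem~2.1 is simply recalled from the literature (Theorem~5.9 of Eisenbud--Harris, \emph{3264 And All That}), with no argument given. So your proposal supplies a complete proof where the paper offers only a citation; there is nothing in the paper to compare your approach against.
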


\begin{theorem}
Let $\mathcal{E}$ be a vector bundle on a smooth variety $X$, and let $\pi : \mathbb{P} \mathcal{E} \to X$ be the projectivized bundle. If we denote the Picard number of $X$ by $Pic(X)$, then 
\begin{center}
$Pic(\mathbb{P}\mathcal{E}) = Pic(X)+1$
\end{center}
\end{theorem}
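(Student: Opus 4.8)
The plan is to establish the stronger statement at the level of Picard groups, namely the canonical isomorphism $\mathrm{Pic}(\mathbb{P}\mathcal{E}) \cong \mathrm{Pic}(X) \oplus \mathbb{Z}\cdot[\mathcal{O}_{\mathbb{P}\mathcal{E}}(1)]$, and then to take ranks to obtain the asserted equality of Picard numbers. (Here the symbol $Pic(X)$ in the statement denotes the Picard number $\rho(X)$, that is, the rank of the N\'{e}ron--Severi group.) Write $r+1 = \mathrm{rank}\,\mathcal{E}$, so that every fiber of $\pi$ is a projective space $\mathbb{P}^r$, and let $\mathcal{O}_{\mathbb{P}\mathcal{E}}(1)$ be the relative hyperplane bundle, whose restriction to each fiber is $\mathcal{O}_{\mathbb{P}^r}(1)$ and which satisfies $\pi_*\mathcal{O}_{\mathbb{P}\mathcal{E}} = \mathcal{O}_X$.

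First I would show that the pullback $\pi^*:\mathrm{Pic}(X) \to \mathrm{Pic}(\mathbb{P}\mathcal{E})$ is injective. If $\pi^* L \cong \mathcal{O}_{\mathbb{P}\mathcal{E}}$, then applying $\pi_*$ and using the projection formula together with $\pi_*\mathcal{O}_{\mathbb{P}\mathcal{E}} = \mathcal{O}_X$ gives $L \cong \pi_*\pi^* L \cong L \otimes \pi_*\mathcal{O}_{\mathbb{P}\mathcal{E}} \cong \mathcal{O}_X$. Next, the class $[\mathcal{O}_{\mathbb{P}\mathcal{E}}(1)]$ lies outside the image of $\pi^*$: its restriction to a fiber is $\mathcal{O}_{\mathbb{P}^r}(1) \neq \mathcal{O}_{\mathbb{P}^r}$, whereas the restriction of any $\pi^* L$ to a fiber is trivial. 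Consequently the homomorphism $\mathrm{Pic}(X) \oplus \mathbb{Z} \to \mathrm{Pic}(\mathbb{P}\mathcal{E})$ sending $(L,a) \mapsto \pi^* L \otimes \mathcal{O}_{\mathbb{P}\mathcal{E}}(a)$ is injective.

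For surjectivity, take any $M \in \mathrm{Pic}(\mathbb{P}\mathcal{E})$. Since $X$ is connected and $\mathrm{Pic}(\mathbb{P}^r) = \mathbb{Z}$, the restriction of $M$ to every fiber is $\mathcal{O}_{\mathbb{P}^r}(a)$ for one fixed integer $a$. Then $N := M \otimes \mathcal{O}_{\mathbb{P}\mathcal{E}}(-a)$ restricts trivially to each fiber, so by cohomology and base change (using that $X$ is smooth, hence reduced, and that $h^0$ of the fiber restriction is identically $1$) the sheaf $L := \pi_* N$ is a line bundle on $X$ and the adjunction map $\pi^* L \to N$ is an isomorphism; hence $M \cong \pi^* L \otimes \mathcal{O}_{\mathbb{P}\mathcal{E}}(a)$. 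This proves the isomorphism $\mathrm{Pic}(\mathbb{P}\mathcal{E}) \cong \mathrm{Pic}(X) \oplus \mathbb{Z}$.

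Finally, because this isomorphism is compatible with algebraic equivalence (a class $\pi^* L$ is algebraically trivial exactly when $L$ is, while $\mathcal{O}_{\mathbb{P}\mathcal{E}}(1)$ is never algebraically trivial since it is nontrivial on fibers), it descends to an isomorphism of N\'{e}ron--Severi groups $\mathrm{NS}(\mathbb{P}\mathcal{E}) \cong \mathrm{NS}(X) \oplus \mathbb{Z}$. Taking ranks yields $\rho(\mathbb{P}\mathcal{E}) = \rho(X) + 1$, which is the claimed formula. The main obstacle is the surjectivity step: the heart of the matter is proving that a line bundle trivial on every fiber is a pullback from the base, which rests on the cohomology-and-base-change fact that $\pi_* N$ is locally free of rank $1$ and that $\pi^*\pi_* N \to N$ is an isomorphism. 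Everything else is formal once the projection formula and $\pi_*\mathcal{O}_{\mathbb{P}\mathcal{E}} = \mathcal{O}_X$ are in hand.
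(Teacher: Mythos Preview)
The paper does not actually prove this statement; it merely recalls it as Theorem~5.9 from Eisenbud--Harris, \textit{3264 And All That}. Your argument supplies a complete and correct proof in its place, following the standard route: the decomposition $\mathrm{Pic}(\mathbb{P}\mathcal{E}) \cong \pi^*\mathrm{Pic}(X) \oplus \mathbb{Z}\cdot[\mathcal{O}_{\mathbb{P}\mathcal{E}}(1)]$ via the projection formula, restriction to fibers, and cohomology-and-base-change is exactly the textbook approach. The only step that could bear one more word is the passage to N\'{e}ron--Severi groups, specifically the claim that $\pi^* L$ algebraically (or numerically) trivial forces $L$ to be so; the cleanest justification is that over any curve $C \subset X$ the bundle $\mathbb{P}(\mathcal{E}|_C) \to C$ admits a section $\tilde{C}$, whence $L \cdot C = \pi^* L \cdot \tilde{C} = 0$. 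In the paper's applications the base is always a flag variety, where $\mathrm{Pic}^0 = 0$ and $\mathrm{Pic} = \mathrm{NS}$, so this subtlety is vacuous there in any case.
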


\begin{lemma}
Let $X$ be a projective variety with the picard number $n$. Let $D_1,\cdots,D_n$ be generators of N\'{e}ron-Severi group $NS(X)$ of $X$. If there are independent irreducible curves $\gamma_1,\cdots,\gamma_n \subset X$ such that
\begin{center}
$(D_i \cdot \gamma_j) = \delta_{ij}$ ~~~ for all $1 \leq i,j \leq n$,
\end{center}
then the Nef cone of $X$ is spanned by $D_1,\cdots,D_n$.
\end{lemma}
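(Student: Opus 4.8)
The plan is to use Kleiman's duality between the Nef cone and the Mori (closed cone of curves). Write $N^1(X)_{\mathbb R}$ and $N_1(X)_{\mathbb R}$ for the real N\'{e}ron--Severi space of divisor classes and the space of curve classes; by hypothesis both have dimension $n$, and the intersection pairing $N^1(X)_{\mathbb R} \times N_1(X)_{\mathbb R} \to \mathbb R$ is a perfect pairing. The relation $(D_i \cdot \gamma_j) = \delta_{ij}$ says exactly that the matrix of intersection numbers is the identity, hence invertible; in particular $D_1,\dots,D_n$ are linearly independent and form a basis of $N^1(X)_{\mathbb R}$ (consistent with their being generators of $NS(X)$), while the independent curves $\gamma_1,\dots,\gamma_n$ form the dual basis of $N_1(X)_{\mathbb R}$. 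Consequently the simplicial cone $\sigma := \mathrm{cone}(D_1,\dots,D_n)$ is precisely the dual cone of $\tau := \mathrm{cone}(\gamma_1,\dots,\gamma_n)$ under this pairing.

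Next I would invoke Kleiman's criterion: since $X$ is projective, $\mathrm{Nef}(X)$ is a closed, full-dimensional, pointed convex cone equal to the dual of the closed cone of curves $\overline{NE}(X)$, and conversely $\overline{NE}(X) = \mathrm{Nef}(X)^{\vee}$. Because taking duals is an order-reversing involution on closed convex cones in a finite-dimensional space, the desired identity $\mathrm{Nef}(X) = \sigma$ is equivalent, after dualizing, to $\overline{NE}(X) = \tau$. One inclusion is immediate: each $\gamma_j$ is an irreducible curve, hence an effective class lying in $\overline{NE}(X)$, so $\tau \subseteq \overline{NE}(X)$, and dually $\mathrm{Nef}(X) \subseteq \sigma$. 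Equivalently, without passing through cones: for any nef class $D$ I write $D = \sum_i c_i D_i$ in the basis and compute $c_j = D \cdot \gamma_j \ge 0$, since $D$ is nef and $\gamma_j$ effective, so $D \in \sigma$.

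The reverse inclusion $\overline{NE}(X) \subseteq \tau$ --- equivalently, that each $D_i$ is actually nef, so that $\sigma \subseteq \mathrm{Nef}(X)$ --- is the heart of the matter and the step I expect to be the main obstacle. The hypothesis $(D_i \cdot \gamma_j) = \delta_{ij}$ controls the pairing of $D_i$ only against the chosen curves $\gamma_j$, not against an arbitrary effective class, so nefness of the $D_i$ does not follow from linear independence alone. I would close this gap by establishing that the $\gamma_j$ generate the full Mori cone $\overline{NE}(X)$, i.e. that they lie on its extremal rays; then $D_i \cdot z \ge 0$ for every $z \in \overline{NE}(X)$ because it holds on the generators, giving $D_i \in \mathrm{Nef}(X)$ and hence $\sigma \subseteq \mathrm{Nef}(X)$. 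In the settings where the lemma is applied this is supplied by the geometry of $X$: the generators are realized as pullbacks of globally generated (hence nef) classes together with the effective divisors $D_X$ and $D_Y$, so one checks directly that each $D_i$ is nef, and the two inclusions combine to yield $\mathrm{Nef}(X) = \sigma$.
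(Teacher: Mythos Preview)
Your approach is the same as the paper's: set up Kleiman's duality between $\mathrm{Nef}(X)$ and $\overline{NE}(X)$, observe that the hypothesis gives dual bases, and try to conclude. The paper's proof simply asserts that ``the closed cone of curves is spanned by $\gamma_1,\dots,\gamma_n$'' and then dualizes; it does not justify that assertion. You have correctly isolated this as the crux and, more importantly, correctly observed that it does \emph{not} follow from the stated hypotheses.

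In fact the lemma is false as stated, and your caution is warranted. Take $X=\mathbb{F}_1=\mathrm{Bl}_p\mathbb{P}^2$ with exceptional curve $E$ and fiber class $F$ (so $E^2=-1$, $F^2=0$, $E\cdot F=1$). Set $D_1=E$, $D_2=F$; these generate $NS(X)$. The irreducible curves $\gamma_1=F$ (a fiber) and $\gamma_2=E+F$ (the proper transform of a general line, a smooth section) satisfy $(D_i\cdot\gamma_j)=\delta_{ij}$. But $\mathrm{Nef}(X)$ is spanned by $F$ and $E+F$, not by $E$ and $F$, since $E$ is not nef. So the hypotheses of the lemma hold while the conclusion fails.

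What your argument (and the paper's) actually proves, cleanly, is the inclusion $\mathrm{Nef}(X)\subseteq\mathrm{cone}(D_1,\dots,D_n)$: writing a nef $D$ as $\sum c_iD_i$ and pairing with $\gamma_j$ gives $c_j=D\cdot\gamma_j\ge 0$. The reverse inclusion requires the additional input that each $D_i$ is nef, which you rightly flag as something to be supplied in the applications (there the generators are pullbacks of Schubert divisors from Grassmannians together with a tautological-type class on a projective bundle, and one checks nefness directly). So your write-up is not only aligned with the paper's method but is more careful than the paper's own proof at exactly the point where care is needed.
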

\begin{proof}
By \cite{2}, there is a vector space of numerical equivalence class of one-cycles denoted by $N_1(X)$. In this vector space, we can define the \textit{cone of curves} $NE(X)$ which is the cone spanned by effective one-cycles on $X$ and by taking its closure we can consider the \textit{closed cone of curves} $\overline{NE}(X)$. Since there exists a perfect pairing between $N_1(X)$ and the N\'{e}ron-Severi group $NS(X)$, this pairing induces the perfect pairing between the closed cone of curves and the Nef cone. Therefore, Nef cone of $X$ is a dual cone of the closed cone of curves. From the conditions above, the closed cone of curves is spanned by $\gamma_1,\cdots,\gamma_n$ and they act like duals of $D_1,\cdots,D_n$. As a result, we can conclude that classes of $D_1,\cdots,D_n$ span the Nef cone of $X$.
\end{proof}

\section{Nef cone of $Hilb_{P_{d,m}(T)}(G(k,n))$ for $m> 2,d\geq 3$}
In this section, I will always assume following conditions : \\
$\bullet$ $d \geq 3$\\
$\bullet$ $n \geq 4$, $1<k \leq \lfloor \frac{n}{2} \rfloor$ ($\because G(k,n) \cong G(n-k,n)$) \\
$\bullet$ Base field is $\mathbb{C}$ \\.
$\bullet$ $P_{d,m}(T) = \binom{T+m}{T} - \binom{T+m-d}{m}$ for all integer $2< m \leq k$\\

In \cite{3}, we show that the Hilbert scheme $Hilb_{P_{d,m}(T)}(G(k,n))$ is a projective bundle $\mathbb{P}(Sym^d \mathcal{S}^*)$ over a disjoint union of two flag varieties $F(k-1,k+m;n)$ and $F(k-m,k+1;n)$. 

\begin{center}
\begin{tikzpicture}[every node/.style={midway}]
        \matrix[column sep={14em,between origins}, row sep={5em}] at (0,0) {
\node(11) {$Hilb_{P_{d,m}(T)}(G(k,n))$};&\node(12) {$\mathbb{P}(Sym^d \mathcal{S}^*)$};&\node(13) {$ $};\\
\node(21) {$ $};&\node(22) {$F(k-1,k+m;n) \sqcup F(k-m,k+1;n)$};&\node(23) {\textit{space of $\mathbb{P}^m$}};\\
        };
        %\draw[right hook->] (11) -- (B1) node[left]{$j_A$};
        %\draw[left hook->] (13) -- (12) node[above]{$i_B$};
        %\draw[->] (22) -- (33) node[above]{$\pi_{a'}$};
        %\draw[dashed,->] (21) -- (22) node[below]{$\sigma_A$};
        \draw[->] (12) -- (22) ;
        \draw[<->] (11) -- (12) node[above]{$\cong$};
        \draw[<->] (22) -- (23) node[above]{$\cong$};
\end{tikzpicture}
\end{center}

The flag variety $F(k-1,k+m;n)$ parametrizes the space of $\mathbb{P}^m$ in the Grassmannian $G(k,n)$ a of the cohomology class $\sigma_{n-k,\cdots,n-k,n-k-m}$. The other flag variety $F(k-m,k+1;n)$ parametrizes the space of $\mathbb{P}^m$ of the cohomology class $\sigma_{n-k,\cdots,n-k,n-k-1,\cdots,n-k-1}$ ($m$ times $(n-k-1)$'s).

From now on, we will call a degree $d$ hypersurfaces in $\mathbb{P}^m$ as a degree $d$, $(m-1)$-fold in this paper and we will denote the unique projective space $\mathbb{P}^m$ that contains this $(m-1)$-fold by the \textit{span} of the degree $d$ $(m-1)$-fold.

Since $Hilb_{P_{d,m}(T)}(G(k,n))$ is the disjoint union of two projective bundles over flag varieties, if we denote the projective bundle over $F(k-1,k+m;n)$ by $\mathcal{M}_1$ and the projective bundle over $F(k-m,k+1;n)$ by $\mathcal{M}_2$, then we can claim that the Nef cone of the Hilbert scheme $Hilb_{P_{d,m}(T)}(G(k,n))$ is spanned by $Nef(\mathcal{M}_1)$ and $Nef(\mathcal{M}_2)$. Moreover, by the symmetric property of Schubert varieties, it is enough to compute the Nef cone one component of the Hilbert scheme $Hilb_{P_{d,m}(T)}(G(k,n))$.

\begin{remark}
When $k+m=n$, then $F(k-1,k+m;n) \cong G(k-1,n)$ and if $k-m=0$, then $F(k-m,k+1;n) \cong G(k+1,n)$. The proof for these cases are not different from the general case when $k+m<n$ and $k-m>0$. But, since we want to work on the general case first, we will always assume that $k+m<n$ and $k-m>0$ for the later subsections.\\
\end{remark}

By theorem 2.1 and 2.2, the Picard number of $\mathcal{M}_1$ is 3. Moreover, two generators of the N\'{e}ron-Severi group $NS(\mathcal{M}_1)$ can be determined by theorem 2.1. In this subsection, we will compute the N\'{e}ron-Severi group of $\mathcal{M}_1$ by observing the last generator, and compute the Nef cone of $\mathcal{M}_1$.

\begin{lemma}
Let $F_{\bullet}$ be a complete flag in $\mathbb{C}^n$ and $X := \Sigma_m(F_{\bullet}) \subset G(k,n)$ is a general special Schubert subvariety in $G(k,n)$. Then, a set $D_X:=\{C \in \mathcal{M}_1 \rvert C \cap X \neq \emptyset \}$ is a codimension 1 subvariety of $\mathcal{M}_1$.
\end{lemma}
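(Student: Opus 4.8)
The plan is to show that $D_X$ is a proper nonempty closed subvariety of $\mathcal{M}_1$ and then check that it has codimension exactly $1$ by a dimension count. First I would verify that $D_X$ is closed: the incidence locus $\{(C,p) \in \mathcal{M}_1 \times X \mid p \in C\}$ is closed in $\mathcal{M}_1 \times X$ (it is cut out by the defining equation of the universal hypersurface together with the condition that $p$ lie on the span $\mathbb{P}^m$ of $C$), and $D_X$ is its image under the projection to $\mathcal{M}_1$, which is proper since $X$ is projective; hence $D_X$ is closed. Next I would argue $D_X \neq \mathcal{M}_1$ and $D_X \neq \emptyset$: a general $C$ has span a linearly embedded $\mathbb{P}^m$ of a fixed Schubert class in $G(k,n)$, and since $X$ is a general special Schubert variety of codimension $m$, for a generic choice of flag $F_\bullet$ the span of a general $C$ misses $X$ entirely (one can move $F_\bullet$ so that $X$ avoids the finitely-or-positive-dimensional span, or equivalently use that the class $\sigma_m \cdot [\mathbb{P}^m]$ on $G(k,n)$ vanishes in the relevant range while a nonzero intersection occurs for special configurations), whereas for some $C$ the span meets $X$, so $D_X$ is a proper nonempty closed subset.

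For the codimension statement I would parametrize $D_X$ via the incidence correspondence $\mathcal{I} = \{(C,p) : p \in C \subset \mathrm{span}(C), \ p \in X\}$ and compute $\dim \mathcal{I}$, then bound the fiber dimension of $\mathcal{I} \to D_X$. The key geometric input is the description from Theorem 1.1 (and \cite{3}): $\mathcal{M}_1 = \mathbb{P}(Sym^d \mathcal{S}^*)$ over $F(k-1,k+m;n)$, so a point of $\mathcal{M}_1$ is the data of a flag $(W_{k-1} \subset W_{k+m} \subset \mathbb{C}^n)$ — equivalently the span $\mathbb{P}^m = \mathbb{P}(W_{k+m}/W_{k-1}) \subset G(k,n)$ — together with a degree $d$ hypersurface in that $\mathbb{P}^m$. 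Fixing a point $p \in X \cap \mathrm{span}$ imposes: (i) the span must pass through $p$, which in the flag variety cuts out a subvariety of codimension equal to the codimension in $G(k,n)$ of the sub-Schubert-variety of spans through a fixed point $p$, and (ii) given such a span, the hypersurfaces through $p$ form a hyperplane in the $\mathbb{P}(Sym^d\mathcal{S}^*)$-fiber, i.e. codimension $1$. Running over the $\dim X = \dim G(k,n) - m$ choices of $p$ and subtracting, the expected codimension of $D_X$ comes out to $1$ precisely because the "span through a fixed point" condition drops dimension by $m$ (matching $\mathrm{codim}_{G(k,n)} X = m$) and the "hypersurface through that point" condition drops it by a further $1$, while moving $p$ in $X$ restores $\dim X = \dim G(k,n) - m$; the net is $-1$. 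I would make this precise by showing the general fiber of $\mathcal{I} \to D_X$ is finite (a general $C \in D_X$ meets $X$ in finitely many points), so $\dim D_X = \dim \mathcal{I}$, and then $\dim \mathcal{I} = \dim \mathcal{M}_1 - 1$ from the fiber computation of $\mathcal{I} \to X$ just described.

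The main obstacle is controlling the intersection $\mathrm{span}(C) \cap X$ for general $C$ — specifically, confirming that it is nonempty-but-not-everything and that for general $C \in D_X$ it is zero-dimensional, rather than the span being forced to sit partially inside $X$ or to miss it for all $C$. This is really a question about the geometry of the family of $\mathbb{P}^m$'s of the fixed cohomology class $\sigma_{n-k,\dots,n-k,n-k-m}$ inside $G(k,n)$ and how they meet a codimension $m$ special Schubert variety; I would resolve it by an explicit coordinate/flag computation in the relevant Schubert cell, choosing $F_\bullet$ generic with respect to the flags defining the family, and using the transitivity of $\mathrm{PGL}_n$ (or the $GL$-action) on pairs of generic flags to reduce to a single model configuration where the intersection dimension can be read off directly. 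Once transversality/genericity is in hand, the codimension count above is routine.
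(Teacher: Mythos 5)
Your overall strategy (incidence correspondence, fibered dimension count, finiteness of the general fiber of $\mathcal{I}\to D_X$) is the same as the paper's, but two of your numerical inputs are wrong, and one of them is the heart of the matter. First, the class $\sigma_m\cdot[\mathbb{P}^m]$ does \emph{not} vanish: the spans parametrized by $F(k-1,k+m;n)$ have class $\sigma_{n-k,\dots,n-k,n-k-m}$, which is of complementary dimension to $X$, and Pieri's rule forces all $m$ added boxes into the last row, so $\sigma_m\cdot\sigma_{n-k,\dots,n-k,n-k-m}=\sigma_{n-k,\dots,n-k}=[\mathrm{pt}]$. Hence \emph{every} span in the family meets $X$, and a general one meets it in exactly one reduced point $q$; a general span does not ``miss $X$ entirely.'' This intersection number $1$ is precisely what the paper's proof turns on: over a general flag the fiber of $D_X$ is the hyperplane of degree-$d$ forms vanishing at $q$, which is where codimension $1$ comes from, and $D_X\subsetneq\mathcal{M}_1$ because a general hypersurface in a general span avoids that single point --- not because the span avoids $X$. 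As written, your argument that $D_X$ is proper rests on a false statement.

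Second, your dimension count does not close. The locus in $F(k-1,k+m;n)$ of spans through a fixed point $p\in G(k,n)$ has codimension $\dim G(k,n)-m$ (the incidence variety of pairs $(L,p)$ with $p\in L$ fibers over $G(k,n)$ with equidimensional fibers by homogeneity), not codimension $m$ as you assert. With the correct value the bookkeeping is $-(\dim G(k,n)-m)-1+\dim X=-1$, as desired; with your value it gives $2m+1-\dim G(k,n)$, which is strictly less than $-1$ in the allowed range since $\dim G(k,n)=k(n-k)\geq m^2\geq 2m+3$ for $m\geq 3$. Finally, the subtlety you flag at the end --- spans $L_1$ meeting $X$ in positive dimension, over which every hypersurface lies in $D_X$ because a hypersurface in $\mathbb{P}^m$ meets every positive-dimensional subvariety --- is handled in the paper by observing that such $L_1$ form a locus of codimension at least $1$ in the flag variety, so they contribute nothing of full dimension; you defer this to an unspecified coordinate computation. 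Correcting the Pieri computation and the fiber codimension would make your sketch essentially the paper's proof.
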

\begin{proof}
Consider an incidence correspondence $I$ as follows :
\begin{center}
$I= \Big\{ (C, \widetilde{X},x,L) ~\Big\rvert~$ 
$\begin{array}{ll}
C \in \mathcal{M}_1,~ \widetilde{X} = \Sigma_m (\widetilde{F}_{\bullet})$ where $\widetilde{F}_{\bullet}$ is a complete flag of $\mathbb{C}^n \\
x \in C \cap \widetilde{X}$, $L \cong \mathbb{P}^m$ where $C \subset L \subset G(k,n)$
$\end{array} \Big\}$
\end{center}
Since we work on $d \geq 3$, every degree $d$, $(m-1)$-fold $C$ has a unique span $\mathbb{P}^m$ in $G(k,n)$ that contains $C$ \cite{3}. From this incidence correspondence we can define four canonical projections
\begin{center}
$\begin{array}{ll}
p_1: I \to \mathcal{M}_1 , & p_2 : I \to $(space of $\Sigma_m$ in $G(k,n)$)$\\
p_3 : I \to G(k,n) , & p_4 : I \to F(k-1,k+m;n)
\end{array}$.
\end{center}

Recall that $D_X = p_1 (p_2^{-1}(X))$. The cohomology class of $\widetilde{X}$ is $\sigma_m$ in $G(k,n)$ and this implies that the Schubert calculus intersection between general $\widetilde{X}$ and a general $L$ is 1 (\cite{1}).

So if we denote a general linear subspace $L_0$ which satisfies $\widetilde{X} \cap L_0$ is a point, and denote the other linear subspace as $L_1$, then we get the following picture :

Since there is no big difference between an arbitrary $\widetilde{X}$ and $X$ in $G(k,n)$, $p_2^{-1}(X)$ will give the same picture as above. Moreover, we get following results :\\

$\bullet$ $p_3 ( (p_2 \times p_4)^{-1} (X,L_0)) = X \cap L_0$ : a point (denote $X \cap L_0 = \{q\}$)\\

$\bullet$ $ (p_2 \times p_4)^{-1} (X,L_0) = \{(C,X,q,L_0) ~\rvert~ q \in C \subset L_0 \} \cong \mathbb{P}^{\binom{m+d}{d}-2}$ \\
$\implies$ the space of degree $d$ hypersurfaces in $L_0 \cong \mathbb{P}^m$ such that contains a point $q$\\

$\bullet$ $p_3 ( (p_2 \times p_4)^{-1} (X,L_1)) = X \cap L_1$ : a linear subspace of $L_1$ bigger than a point\\

$\bullet$ $p_1 ( (p_2 \times p_4)^{-1} (X,L_1)) \cong \mathbb{P}^{\binom{m+d}{d}-1}$ \\
$\implies$ the space of degree $d$ hypersurfaces in $L_1 \cong \mathbb{P}^m$ \\
($\because$ A line intersects with any hypersurface in $\mathbb{P}^m$ via Fundamental theorem of calculus).\\

Consider that the degree of hypersurface $d$ is greater than 2. Because of this, every hypersurface $C$ has a unique span $L \cong \mathbb{P}^m$ and this tells us that $D_X=p_1(p_2^{-1}(X)$ is isomorphic to $(p_1 \times p_4)(p_2^{-1}(X))$. In better words, if we define a map $\psi : \mathcal{M}_1 \to \mathcal{M}_1 \times F(k-1,k+m;n)$ as $C \mapsto (C,L)$ where $L$ is a $m$-dimensional projective space in $G(k,n)$ which $C$ is embedded into $L$, then $d \geq 3$ condition implies that $\psi$ is an isomorphism. And $(p_1 \times p_4)(p_2^{-1}(X)) = \psi \Big( p_1(p_2^{-1}(X) \Big)$. Using the map $\psi$, we can measure the codimension of $D_X$ in $\mathcal{M}_1$ by computing the codimension of $(p_1 \times p_4)(p_2^{-1}(X))$ in $\psi (\mathcal{M}_1)$.

With the picture of $p_2^{-1}(X)$ above, on a general $L_0$, a set of hypersurfaces of $(p_1 \times p_4)(p_2^{-1}(X))$ is a $\Big( \binom{m+d}{d}-2 \Big)$-dimensional projective space and $\psi (\mathcal{M}_1)$ is a $\Big( \binom{m+d}{d}-1 \Big)$-dimensional projective space. So it has codimension 1 for general $L_0$. However, on a special $L_1$, both $(p_1 \times p_4)(p_2^{-1}(X))$ and $\psi (\mathcal{M}_1)$ induce the same set of hypersurfaces, which is a $\Big( \binom{m+d}{d}-1 \Big)$-dimensional projective space. But, since the set of special $L_1$'s in $F(k-1,k+m;n)$ has at least codimension 1, we can also claim that the codimension of $(p_1 \times p_4)(p_2^{-1}(X))$ in $\psi (\mathcal{M}_1)$ restricted to special $L_1$ has codimension at least 1. Therefore, we get that the codimension of $(p_1 \times p_4)(p_2^{-1}(X))$ in $\psi (\mathcal{M}_1)$ is 1 and this completes the proof.
\end{proof}

With this lemma, we can claim that $D_X$ is a divisor in $\mathcal{M}_1$.

\begin{theorem}
$NS(\mathcal{M}_1)$ is generated by pull-back of the $NS(F(k-1,k+m;n))$ and a divisor class associated to $D_X := \{ C \in \mathcal{M}_1 \rvert C \cap X \neq \emptyset \}$ where $X := \Sigma_m(F_{\bullet}) \subset G(k,n)$ is a general special Schubert subvariety in $G(k,n)$.
\end{theorem}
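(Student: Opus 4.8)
The plan is to exploit the projective-bundle structure $\pi\colon \mathcal{M}_1=\mathbb{P}(Sym^d\mathcal{S}^*)\to F:=F(k-1,k+m;n)$ together with the standard description of the N\'{e}ron-Severi group of a projectivized bundle. Recall that for a projective bundle $\pi\colon\mathbb{P}\mathcal{E}\to Z$ one has $NS(\mathbb{P}\mathcal{E})=\pi^{*}NS(Z)\oplus\mathbb{Z}\,\xi$, where $\xi=c_{1}(\mathcal{O}_{\mathbb{P}\mathcal{E}}(1))$ is the relative hyperplane class: the summand $\pi^{*}NS(Z)$ restricts to $0$ on every fiber, while $\xi$ restricts to the hyperplane class of each fiber $\mathbb{P}(\mathcal{E}_{z})$. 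By Theorem 2.1 the two-step flag variety $F$ has Picard number $2$, and by Theorem 2.2 the Picard number of $\mathcal{M}_1$ is $3$. Hence $\pi^{*}NS(F)$ and $[D_X]$ generate $NS(\mathcal{M}_1)$ as soon as $[D_X]$ restricts to a generator of $NS(\text{fiber})\cong\mathbb{Z}$ on a general fiber of $\pi$: this forces $[D_X]$ to map to a generator of the rank-one quotient $NS(\mathcal{M}_1)/\pi^{*}NS(F)$, and together with the rank-two summand $\pi^{*}NS(F)$ it then spans all of $NS(\mathcal{M}_1)$.

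So the first thing I would do is write down a general fiber explicitly. A point of $F$ is a flag $W_{1}\subset W_{2}\subset\mathbb{C}^{n}$ with $\dim W_{1}=k-1$, $\dim W_{2}=k+m$; it determines the linear $\mathbb{P}^{m}$
\[
L:=\{\,W\ :\ W_{1}\subset W\subset W_{2},\ \dim W=k\,\}\ \cong\ \mathbb{P}(W_{2}/W_{1})\ \subset\ G(k,n)
\]
of cohomology class $\sigma_{n-k,\cdots,n-k,n-k-m}$, and the fiber over $[W_{1}\subset W_{2}]$ is the space of degree $d$ hypersurfaces in $L$, a projective space $\mathbb{P}^{N}$ with $N=\binom{m+d}{d}-1$ on which $\xi$ restricts to the usual hyperplane class. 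Now for general $X=\Sigma_{m}(F_{\bullet})$ and a general point of $F$, the intersection $D_X\cap\pi^{-1}([W_{1}\subset W_{2}])$ is the locus $\{\,C\subset L\ :\ C\cap X\neq\emptyset\,\}$, so I need to understand $X\cap L$ inside $G(k,n)$. As in the proof of Lemma 3.2, $[X]\cdot[L]=\sigma_{m}\cdot\sigma_{n-k,\cdots,n-k,n-k-m}=1$ by Pieri's rule (the only legal way to add $m$ boxes to $(n-k,\cdots,n-k,n-k-m)$ inside the $k\times(n-k)$ box is to fill up the last row, which gives the point class), and by Kleiman's transversality theorem a general translate of $X$ meets a general such $L$ transversally. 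Hence $X\cap L$ is a single reduced point $q\in L\cong\mathbb{P}^{m}$, and $D_X\cap\pi^{-1}([W_{1}\subset W_{2}])$ is exactly the set of degree $d$ forms on $\mathbb{P}^{m}$ vanishing at $q$ --- one linear condition on the coefficients, i.e.\ a hyperplane $\mathbb{P}^{N-1}\subset\mathbb{P}^{N}$ occurring with multiplicity one. Therefore $[D_X]$ restricts to $\xi|_{\text{fiber}}$ on a general fiber, and the theorem follows.

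The only delicate point --- and the main, if modest, obstacle --- is the reducedness in the last step: one must know that $X\cap L$ is a reduced point rather than a fat point, so that $[D_X]$ restricts to $\mathcal{O}(1)$ and not to some $\mathcal{O}(e)$ with $e>1$. This is precisely where the genericity of the flag $F_{\bullet}$ defining $X$ and of the chosen point of $F$ is used, via Kleiman transversality applied to the $GL_{n}$-orbit of $X$ and the $GL_{n}$-orbit of $L$ in $G(k,n)$. If one is content with the weaker assertion that $\pi^{*}NS(F)$ and $[D_X]$ span $NS(\mathcal{M}_1)_{\mathbb{R}}$ --- which is all that is needed for the Nef cone computation via Lemma 2.3 --- then even this is unnecessary: it is enough that $D_X$ restricts to a \emph{nonzero} effective divisor on the general fiber, and that is immediate from Lemma 3.2 together with the fact that $D_X$, being a proper subvariety, cannot contain a general fiber while the locus $\{C\subset L : q\in C\}$ is nonempty.
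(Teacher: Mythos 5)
Your argument is correct, and it rests on the same geometric fact as the paper's --- namely that a general special Schubert variety $X=\Sigma_m(F_\bullet)$ meets a general $L\cong\mathbb{P}^m$ of class $\sigma_{n-k,\dots,n-k,n-k-m}$ in exactly one point --- but the mechanism is genuinely different. The paper proves independence of $[D_X]$ from $\pi^*NS(F)$ by exhibiting a dual curve (Lemma 3.4): a pencil $sf+tg$ of degree $d$ forms inside a single fiber $\mathbb{P}(Sym^d H^0(\mathcal{O}_L(1)))$, with the flag $(V_1,V_2)$ chosen so that the fiber avoids the two pulled-back Schubert divisors, and with $f(W)=0$, $g(W)\neq 0$ at the point $W=X\cap L$, giving $[D_X]\cdot\gamma=1$. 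You instead restrict the divisor class $[D_X]$ to a whole general fiber and identify it with the relative hyperplane class via the splitting $NS(\mathbb{P}\mathcal{E})=\pi^*NS(F)\oplus\mathbb{Z}\xi$. These are two sides of the same computation --- the paper's $\gamma$ is a line in a fiber, and $[D_X]\cdot\gamma$ is precisely the degree of $D_X|_{\text{fiber}}$ --- but your packaging buys a few things: you do not need to arrange the fiber to avoid the Schubert divisors (pullbacks from the base restrict to zero on any fiber automatically); you get integral generation of $NS(\mathcal{M}_1)$ directly, whereas linear independence alone only gives a finite-index sublattice; and you make explicit the Pieri computation $\sigma_m\cdot\sigma_{n-k,\dots,n-k,n-k-m}=\sigma_{n-k,\dots,n-k}$ and the Kleiman transversality needed for $X\cap L$ to be a single \emph{reduced} point, which the paper asserts without justification. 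The one thing the paper's route buys in exchange is that the explicit dual curve $\gamma$ is reused in Theorem 3.6 to pin down the Nef cone, so it is not wasted effort there; your fiber-restriction argument would still need such a curve (or an equivalent duality statement) at that later stage. Your closing observation --- that for the Nef cone one only needs real spanning, for which reducedness of $X\cap L$ is irrelevant --- is a correct and worthwhile remark.
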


\begin{proof}
By Theorems 2.1 and 2.2 above, the generators of $NS(F(k-1,k+m;n))$ induce generators of $NS(\mathcal{M}_1)$ and $rank~NS(\mathcal{M}_1)=3$. If we can show that a family of curves $D_X$ induces a divisor class which is independent to $NS(F(k-1,k+m;n))$, then we can claim that a divisor associated to $D_X$ is the other generator of $NS(\mathcal{M}_1)$.

\begin{center}
\begin{tikzpicture}[every node/.style={midway}]
        \matrix[column sep={12em,between origins}, row sep={3em}] at (0,0) {
\node(11) {$\mathcal{M}_1$};&\node(12) {$\mathbb{P}(Sym^d \mathcal{S}^*)$};&\node(13) {$ $};\\
\node(21) {$ $};&\node(22) {$F(k-1,k+m;n)$};&\node(23) {$ $};\\
\node(31) {$G(k-1,n)$};&\node(32) {$ $};&\node(33) {$G(k+m,n)$};\\
\node(41) {$\Sigma_1(F_{\bullet}^{(1,k-1)})$};&\node(42) {$ $};&\node(43) {$\Sigma_1(F_{\bullet}^{(1,k+m)})$};\\
        };
        %\draw[right hook->] (11) -- (B1) node[left]{$j_A$};
        %\draw[left hook->] (13) -- (12) node[above]{$i_B$};
        %\draw[->] (22) -- (33) node[above]{$\pi_{a'}$};
        %\draw[dashed,->] (21) -- (22) node[below]{$\sigma_A$};
        \draw[->] (12) -- (22) node[right]{$\varphi$};
        \draw[<->] (11) -- (12) node[above]{$\cong$};
        \draw[->] (22) -- (31) node[above]{$\pi_1$};
        \draw[->] (22) -- (33) node[above]{$\pi_2$};
        \draw[right hook->] (41) -- (31);
        \draw[right hook->] (43) -- (33);
\end{tikzpicture}
\end{center}

 $\mathcal{M}_1$ can be expressed as above. If we pick arbitrary complete flags $F_{\bullet}^{(1,k-1)}$ and $F_{\bullet}^{(1,k+m)}$ in $\mathcal{C}^n$, then the first Schubert classes in each Grassmannian give the generators of $NS(F(k-1,k+m;n))$. Denote as
\begin{center}
$c_1 := [\Sigma_1(F_{\bullet}^{(1,k-1)})]$ ~~~ and ~~~ $c_2 := [\Sigma_1(F_{\bullet}^{(1,k+m)})]$,

$NS(F(k-1,k+m;n)) = \mathbb{Z} \pi_1^* c_1 \oplus \mathbb{Z} \pi_2^* c_2$.
\end{center}

Since $\mathcal{M}_1$ is a projective bundle over $F(k-1,k+m;n)$, $(\pi_1 \circ \varphi)^*(c_1)$ and  $(\pi_2 \circ \varphi)^*(c_2)$ are linearly independent in $NS(\mathcal{M}_1)$ and we need to show that the class $[D_X]$ is linearly independent with respect to these two classes. To show this, in Lemma 3.4 we exhibit a curve $\gamma$ dual to $[D_X]$.
\end{proof}

\begin{corollary}
$NS(\mathcal{M}_2)$ is generated by pull-back of the $NS(F(k-m,k+1;n))$ and a divisor class associated to $D_Y := \{ C \in \mathcal{M}_2 \rvert C \cap Y \neq \emptyset \}$ where $Y := \Sigma_{1,\cdots,1}(F_{\bullet}) \subset G(k,n)$ is a general special Schubert subvariety in $G(k,n)$ ($codim (Y \subset G(k,n))=m$).
\end{corollary}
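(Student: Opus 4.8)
The plan is to deduce this from Theorem 3.3 by means of the classical duality isomorphism. Fix an identification $\mathbb{C}^n \cong (\mathbb{C}^n)^{*}$ and let $\phi : G(k,n) \to G(n-k,n)$ be the induced isomorphism $W \mapsto W^{\perp}$. Since $\phi$ comes from a linear automorphism of Plücker coordinates, it preserves the polarization $\mathcal{O}(1)$ and hence induces an isomorphism of Hilbert schemes $\Phi : Hilb_{P_{d,m}(T)}(G(k,n)) \to Hilb_{P_{d,m}(T)}(G(n-k,n))$. I would use $\Phi$ to transport the conclusion of Theorem 3.3 for $G(n-k,n)$ — i.e.\ with $k$ replaced by $n-k$ — back to a statement about $\mathcal{M}_2$.

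The key step is to track what $\Phi$ does to the objects in the statement. The duality $\phi$ interchanges the two families of maximal linear $\mathbb{P}^m$'s in the Grassmannian: a flag $W_1 \subset W_2$ with $\dim W_i = k-m,\, k+1$ — that is, a point of the base $F(k-m,k+1;n)$ of $\mathcal{M}_2$ — is carried to $W_2^{\perp} \subset W_1^{\perp}$, of dimensions $(n-k)-1$ and $(n-k)+m$, which is a point of $F((n-k)-1,(n-k)+m;n)$. Composing with the span maps (the span of a degree $d$, $(m-1)$-fold being unique for $d \ge 3$; cf.\ the proof of Lemma 3.2) shows that $\Phi$ restricts to an isomorphism of $\mathcal{M}_2$ onto the analogous component $\mathcal{M}_1'$ of $Hilb_{P_{d,m}(T)}(G(n-k,n))$ — the ``$\mathcal{M}_1$'' for $G(n-k,n)$ — sitting over this isomorphism of bases; in particular $\Phi^{*}$ matches the pull-back of $NS(F(k-m,k+1;n))$ with the pull-back of $NS(F((n-k)-1,(n-k)+m;n))$. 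Moreover $\phi$ sends a Schubert class to the one indexed by the conjugate partition, so it carries $Y = \Sigma_{1,\cdots,1}(F_{\bullet})$ (codimension $m$, class $\sigma_{(1^{m})}$) to a general special Schubert variety $X' := \Sigma_m(F'_{\bullet}) \subset G(n-k,n)$ of codimension $m$ (class $\sigma_{(m)}$); consequently $\Phi(D_Y) = D_{X'}$, and the same identification shows, via Lemma 3.2 applied to $G(n-k,n)$, that $D_Y$ is a divisor in $\mathcal{M}_2$.

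It then remains to invoke Theorem 3.3 for $G(n-k,n)$: its conclusion is that $NS(\mathcal{M}_1')$ is spanned by the pull-back of $NS(F((n-k)-1,(n-k)+m;n))$ and $[D_{X'}]$. Here one uses that the running hypothesis $1<k\le\lfloor\frac{n}{2}\rfloor$ of \S 3 is merely a normalization — justified, as noted there, by $G(k,n)\cong G(n-k,n)$ — and that the generic hypotheses $k+m<n$, $k-m>0$ become $m<k$, $m<n-k$ for $G(n-k,n)$, both valid under our standing assumptions, with the boundary cases $m=k$ and $m=n-k$ handled exactly as in the remark preceding Lemma 3.2. Pulling this description back along $\Phi^{-1}$ gives precisely that $NS(\mathcal{M}_2)$ is spanned by the pull-back of $NS(F(k-m,k+1;n))$ and $[D_Y]$, as asserted.

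The step I expect to demand the most care is organizational rather than conceptual: verifying that $\Phi$ really is a morphism over the flag-variety isomorphism $F(k-m,k+1;n)\cong F((n-k)-1,(n-k)+m;n)$, which amounts to tracking the identification $W_1^{\perp}/W_2^{\perp}\cong (W_2/W_1)^{*}$ of tautological data and checking it is compatible with the presentations of both components as projective bundles of the form $\mathbb{P}(Sym^d\mathcal{S}^{*})$. If one prefers to avoid the duality, the alternative is to reprove Lemma 3.2 and Theorem 3.3 verbatim with $F(k-m,k+1;n)$ replacing $F(k-1,k+m;n)$ and $Y$ replacing $X$: the incidence correspondence, the isomorphism $\psi$ encoding uniqueness of the span, and the codimension count are all formally identical, so the argument goes through without change.
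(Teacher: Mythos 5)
Your proposal is correct and follows essentially the same route the paper intends: the paper offers no separate proof of this corollary, relying on its earlier remark that ``by the symmetric property of Schubert varieties, it is enough to compute the Nef cone of one component,'' and your orthogonal-complement duality $W \mapsto W^{\perp}$ (conjugating the partition $(1^m)$ to $(m)$ and exchanging the two flag-variety bases) is exactly the precise form of that symmetry. Your fallback of rerunning Lemma 3.2 and Theorem 3.3 verbatim with $F(k-m,k+1;n)$ and $Y$ in place of $F(k-1,k+m;n)$ and $X$ is also sound and is the other natural reading of the paper's omission.
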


\begin{lemma}
There is a 1-dimensional family $\gamma$ of hypersurfaces in $\mathcal{M}_1$ such that satisfies following conditions :
\begin{center}
$(\pi_1 \circ \varphi)^*(c_1) \cdot \gamma =0$

$(\pi_2 \circ \varphi)^*(c_2) \cdot \gamma =0$

$[D_X]\cdot \gamma =1$
\end{center}
\end{lemma}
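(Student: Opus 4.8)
The plan is to construct the curve $\gamma$ explicitly inside one fiber of the projective bundle $\varphi : \mathcal{M}_1 \to F(k-1,k+m;n)$. First I would fix a single point $p_0 \in F(k-1,k+m;n)$, which corresponds to a fixed flag $W_1 \subset W_2 \subset \mathbb{C}^n$ with $\dim W_1 = k-1$, $\dim W_2 = k+m$, and hence to a fixed $m$-plane $L_0 \cong \mathbb{P}^m \subset G(k,n)$ (the span). The fiber $\varphi^{-1}(p_0) \cong \mathbb{P}(Sym^d \mathcal{S}^*|_{p_0}) \cong \mathbb{P}^{\binom{m+d}{d}-1}$ is then the complete linear system of degree $d$ hypersurfaces inside $L_0 \cong \mathbb{P}^m$. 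Inside this fiber I take $\gamma$ to be a general line, i.e.\ a pencil of degree $d$ hypersurfaces in $L_0$ spanned by two general members $C_0, C_1$. Since $\gamma$ lies in a single fiber of $\varphi$, its image under $\pi_i \circ \varphi$ is a point for $i=1,2$, so $(\pi_1\circ\varphi)^*(c_1)\cdot \gamma = (\pi_2\circ\varphi)^*(c_2)\cdot\gamma = 0$ is immediate from the projection formula; this disposes of the first two conditions with no real work.

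The substantive point is the computation $[D_X]\cdot\gamma = 1$. Here I would choose the general special Schubert variety $X = \Sigma_m(F_\bullet)$ so that, as in the proof of Lemma 3.3, $X$ meets the fixed span $L_0$ transversely in a single point $q = X\cap L_0$ (this uses that $[\Sigma_m]\cdot[L_0] = \sigma_m \cdot \sigma_{\text{class of }L_0} = 1$ in the cohomology of $G(k,n)$, exactly the Schubert-calculus intersection number invoked earlier). Then for $C$ in the pencil $\gamma$, the condition $C \in D_X$ is equivalent to $C\cap X\neq\emptyset$; since $C\subset L_0$ and $X\cap L_0 = \{q\}$, this is equivalent to $q\in C$. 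So $D_X\cap\gamma$ is precisely the set of members of the pencil $\{\lambda C_0 + \mu C_1\}$ passing through the single point $q\in L_0$. Evaluating the two defining equations of $C_0, C_1$ at $q$ gives a single linear condition $\lambda f_0(q) + \mu f_1(q) = 0$ on $[\lambda:\mu]\in\mathbb{P}^1$, which for general $C_0, C_1$ (equivalently general $q$, or general $X$) has exactly one solution. Hence $D_X$ meets $\gamma$ in exactly one point.

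To conclude $[D_X]\cdot\gamma = 1$ rather than merely that the set-theoretic intersection is one point, I would check that the intersection is transverse, i.e.\ reduced of multiplicity one: the restriction of the linear functional ``evaluate at $q$'' to the two-dimensional space $\langle f_0, f_1\rangle$ is nonzero for general choices, so it vanishes to order exactly one along $\gamma \cong \mathbb{P}^1$, giving multiplicity $1$. Equivalently, one can argue that $D_X$ restricted to the fiber $\varphi^{-1}(p_0)$ is the hyperplane ``hypersurfaces through $q$'' in $\mathbb{P}^{\binom{m+d}{d}-1}$, and a general line meets a hyperplane transversely in one point. Combined with Lemma 3.1, this shows $\gamma$ is the curve dual to $[D_X]$ relative to the pull-back classes, completing the proof; together with Theorem 3.2 and Lemma 3.1 it also yields that $[D_X]$, $(\pi_1\circ\varphi)^*c_1$, $(\pi_2\circ\varphi)^*c_2$ generate $\mathrm{Nef}(\mathcal{M}_1)$.

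The main obstacle I anticipate is purely bookkeeping: one must be careful that the single point $q = X\cap L_0$ genuinely lies in the open locus of $L_0$ over which the Schubert condition is ``generic,'' and that the chosen pencil $\gamma$ is generic enough that $q$ is not a base point of the pencil nor contained in both $C_0$ and $C_1$ — otherwise every member of $\gamma$ would pass through $q$ and the intersection number would jump. All of these are open, nonempty conditions satisfied for general $X$ and general line $\gamma$, so a dimension count shows the general configuration behaves as claimed; the only care needed is to phrase the genericity correctly so that ``general $X$'' in the statement is compatible with ``general $\gamma$'' in the construction.
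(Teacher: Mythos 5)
Your proposal is correct and follows essentially the same route as the paper: a pencil of degree $d$ hypersurfaces inside a single fiber of $\varphi$ over a general point of $F(k-1,k+m;n)$, with $[D_X]\cdot\gamma=1$ coming from the fact that $X$ meets the fixed span $L_0$ in a single point $q$ so that $D_X$ restricts to the hyperplane of hypersurfaces through $q$. Your treatment is if anything slightly cleaner on two minor points the paper handles differently or glosses over — you get the vanishing of the pulled-back classes directly from the projection formula (the paper instead chooses the flag $(V_1,V_2)$ to miss the Schubert divisors), and you explicitly verify the intersection with $D_X$ is transverse rather than just a set-theoretic singleton.
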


\begin{proof}
It is enough to find a curve $\gamma$ in $\mathcal{M}_1$  which intersect with $D_X$ at a point and disjoint with $\mathbb{P}(Sym^d \mathcal{S}^d)$ over $\pi_1^{-1}(\Sigma_1(F_{\bullet}^{(1,k-1)}))$ and $\pi_2^{-1}(\Sigma_1(F_{\bullet}^{(1,k+m)}))$.

Before constructing a curve $\gamma$, let's' take a general $m$-dimensional linear projective space in $G(k,n)$. Since we work in $\mathcal{M}_1$ such linear space corresponds to a pair $(V_1,V_2) \in F(k-1,k+m;n)$ via
\begin{center}
$\{ V\in G(k,n) \rvert V_1 \subseteq V \subseteq V_2 \} \cong \mathbb{P}^m$.
\end{center}
By generality, we can find a pair $(V_1,V_2)$ satisfying
\begin{center}
$V_1 \cap F^{(1,k-1)}_{n-k+1} = 0$ and $V_2 \cap F^{(1,k+m)}_{n-k-m}=0$

$\mathbb{P}(V_2/V_1) \cap X = \{W\} \subset G(k,n)$

(where $W$ is $k$-dimensional space in $\mathbb{C}^n$ such that $V_1 \subset W \subset V_2$, $W\in X$)
\end{center}

With this pair $(V_1,V_2)$ and a point $W \in G(k,n)$, we can express $W$ as a point in $\mathbb{P}^m$ by using local coordinates of $\mathbb{P}(V_2/V_1)$. With the same local coordinates, we can find two degree $d$ homogeneous polynomials $f,g \neq 0$ in $\mathbb{P}(V_2/V_1)$ such that $f(W)=0,g(W) \neq 0$, then each polynomial defines an element in $\mathcal{M}_1$. Now, let $\gamma$ be a pencil which is spanned by $f$ and $g$.
\begin{center}
$\gamma := \Big\{ C \in \mathcal{M}_1 ~\Big\rvert~ \begin{array}{l}
C \subset \mathbb{P}(V_2/V_1), \\
C \text{ is a zero locus of } sf+tg \text{ where } [s:t] \in \mathbb{P}^1
\end{array} \Big\}$
\end{center}

If we simply denote $\mathcal{N}_{1,\mathcal{M}_1}$ is $\mathcal{M}_1$ restricted to the base $\pi_1^{-1}(G(k-1,n))$ and $\mathcal{N}_{2,\mathcal{M}_1}$ is $\mathcal{M}_1$ restricted to the base $\pi_2^{-1}(G(k+m,n))$, then we show that
\begin{center}
$V_1 \cap F^{(1,k-1)}_{n-k+1} = 0 \implies \mathcal{N}_{1,\mathcal{M}_1} \cap \gamma = \emptyset \implies (\pi_1 \circ \varphi)^*(c_1) \cdot \gamma =0$

and

$V_2 \cap F^{(1,k+m)}_{n-k-m}=0 \implies \mathcal{N}_{2,\mathcal{M}_1} \cap \gamma = \emptyset \implies (\pi_2 \circ \varphi)^*(c_2) \cdot \gamma =0$.
\end{center}

Moreover, by the definition of $\gamma$, $D_X \cap \gamma$ is a singleton of degree $d$ $(m-1)$-fold contained in $\mathbb{P}(V_2/V_1)$ defined by the polynomial $f$ (which is the case $[s:t]=[1:0]$ in $\gamma$). Therefore, we have
\begin{center}
$[D_X]\cdot \gamma =1$
\end{center}
Therefore, $\gamma$ is a dual curve of the class $[D_X]$ in $NS(\mathcal{M}_1)$.
\end{proof}

\begin{remark}
In this paper, we will call this 1-dimensional family $\gamma$ as a \textit{dual curve} of the divisor $[D_X]$, because this curve acts like a dual element among the generators of the Nef cone of the Hilbert scheme.
\end{remark}

\begin{theorem}
The Nef cone $Nef(\mathcal{M}_1)$ is spanned by $(\pi_1 \circ \varphi)^*(c_1)$, $(\pi_2 \circ \varphi)^*(c_2)$, and $[D_X]$.
\end{theorem}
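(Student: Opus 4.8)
The plan is to apply Lemma 2.3 to $\mathcal{M}_1$. By Theorem 3.3, $NS(\mathcal{M}_1)$ has rank $3$, generated by $H_1:=(\pi_1\circ\varphi)^*c_1$, $H_2:=(\pi_2\circ\varphi)^*c_2$ and $H_3:=[D_X]$; here $H_1,H_2$ are pull-backs of Schubert divisor classes, hence nef, while $H_3$ is nef because the divisors $D_X$ move in a family (indexed by the special Schubert varieties of type $\sigma_m$) covering $\mathcal{M}_1$, no curve being contained in every member. So it suffices to produce three independent irreducible curves $\gamma_1,\gamma_2,\gamma_3\subset\mathcal{M}_1$ with $H_i\cdot\gamma_j=\delta_{ij}$. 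The curve $\gamma$ of Lemma 3.4 serves as $\gamma_3$: it is contracted by $\varphi$, so $H_1\cdot\gamma=H_2\cdot\gamma=0$, whereas $H_3\cdot\gamma=1$. It remains to build $\gamma_1$ dual to $H_1$ and $\gamma_2$ dual to $H_2$; I would construct both as sections of the projective bundle $\varphi:\mathbb{P}(Sym^d\mathcal{S}^*)\to F(k-1,k+m;n)$ lying over a Schubert line in the base, the real content being to make them disjoint from $D_X$. Since the class $[D_X]$ does not depend on the choice of $X$, I may use any general $X=\Sigma_m(F_{\bullet})$ in this verification.

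For $\gamma_1$, take $\delta_1=\{(V_1,V_2^0):U\subseteq V_1\subseteq W_0\}$, the line in $F(k-1,k+m;n)$ cut out by general subspaces $U\subset W_0\subset V_2^0$ of dimensions $k-2,k,k+m$, so $c_1\cdot\delta_1=1$ and $c_2\cdot\delta_1=0$. Over $\delta_1$ every $m$-plane $\mathbb{P}(V_2^0/V_1)\subset G(k,n)$ passes through the single point $W_0$, which lies off a general $X$, and $X$ cuts each such $m$-plane in the one point $q(V_1)=V_1+(F_{n-k+1-m}\cap V_2^0)$; as $F_{n-k+1-m}\cap V_2^0$ is a fixed line, the image of $q(V_1)$ under projection of $\mathbb{P}(V_2^0/V_1)$ from $W_0$ to the fixed $\mathbb{P}(V_2^0/W_0)\cong\mathbb{P}^{m-1}$ is a point $\bar q$ independent of $V_1$. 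Fixing a degree-$d$ form $g$ on $\mathbb{P}(V_2^0/W_0)$ with $g(\bar q)\ne 0$ and pulling $\{g=0\}$ back to each $\mathbb{P}(V_2^0/V_1)$ (a degree-$d$ cone with vertex $W_0$) defines a section $\gamma_1$ of $\varphi$ over $\delta_1$; no member of $\gamma_1$ contains $q(V_1)$, so $\gamma_1\cap D_X=\emptyset$, and hence $H_1\cdot\gamma_1=1$, $H_2\cdot\gamma_1=0$, $H_3\cdot\gamma_1=0$. For $\gamma_2$, take $\delta_2=\{(V_1^0,V_2):W_1\subseteq V_2\subseteq Z\}$ for general $V_1^0\subset W_1\subset Z$ of dimensions $k-1,k+m-1,k+m+1$, so $c_1\cdot\delta_2=0$ and $c_2\cdot\delta_2=1$. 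Every $\mathbb{P}(V_2/V_1^0)$ contains the fixed hyperplane $P:=\mathbb{P}(W_1/V_1^0)\cong\mathbb{P}^{m-1}$, and a general $X$ has $X\cap P=\emptyset$ for dimension reasons. Let $\gamma_2$ be the family $\{\,d\cdot P\subset\mathbb{P}(V_2/V_1^0)\,\}$ of $d$-fold thickenings of $P$; since $d\ge 3$, such a thickening spans its ambient $m$-plane, so $\varphi$ recovers $V_2$ and $\gamma_2$ is a genuine section over $\delta_2$, every member of which is supported on the fixed $P$ and hence disjoint from $X$. Thus $H_1\cdot\gamma_2=0$, $H_2\cdot\gamma_2=1$, $H_3\cdot\gamma_2=0$.

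The curves $\gamma_1,\gamma_2,\gamma_3$ then have the identity intersection matrix against $H_1,H_2,H_3$, so they are independent and Lemma 2.3 gives $Nef(\mathcal{M}_1)=\langle H_1,H_2,H_3\rangle$. The step I expect to require the most care is the disjointness $\gamma_i\cap D_X=\emptyset$, where one has to control how a general $X=\Sigma_m(F_{\bullet})$ meets the \emph{entire} family of $m$-planes over the Schubert line $\delta_i$, in particular ruling out members met by $X$ in positive dimension. For $\gamma_2$ this is immediate, since every hypersurface in $\gamma_2$ is supported on the fixed $P$ and $X\cap P=\emptyset$. For $\gamma_1$ it follows from the observation that for general $V_2^0$ and $F_{\bullet}$ the line $F_{n-k+1-m}\cap V_2^0$ is not contained in $W_0$, so that $X$ meets every $\mathbb{P}(V_2^0/V_1)$ with $V_1\in\delta_1$ in the single point $V_1+(F_{n-k+1-m}\cap V_2^0)$, and the cone section above then misses it everywhere.
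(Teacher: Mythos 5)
Your proof is correct, and it follows the same overall strategy as the paper: invoke Lemma 2.3 with the curve $\gamma$ of Lemma 3.4 together with two sections of $\varphi$ lying over the two dual Schubert lines in $F(k-1,k+m;n)$. But you diverge from the paper in two places, and both divergences are improvements. First, the paper builds \emph{both} $\gamma'$ and $\gamma''$ as $d$-th powers of ``the common $\mathbb{P}^{m-1}$'' of the pencil of $m$-planes; this works for the pencil with $W_1$ fixed and $W_2$ varying (your $\delta_2$, where the base locus $\mathbb{P}(W_1/V_1^0)$ really is a common hyperplane), but for the pencil with $W_2=V_2^0$ fixed and $W_1$ varying (your $\delta_1$) two distinct members $\mathbb{P}(V_2^0/V_1)$ and $\mathbb{P}(V_2^0/V_1')$ meet only in the single point $W_0=V_1+V_1'$, so no common $\mathbb{P}^{m-1}$ with fixed support exists and the paper's construction of $\gamma'$ does not literally go through. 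Your cone construction --- pulling back a fixed degree-$d$ form on $\mathbb{P}(V_2^0/W_0)$ through the projection from the common vertex $W_0$, and checking that the unique point $V_1+(F_{n-k+1-m}\cap V_2^0)$ of $X$ in each fiber projects to a point independent of $V_1$ --- is exactly the kind of repair needed, and your verification that this point avoids a suitably chosen $g$ is sound. Second, you verify that the three classes are nef ($H_1,H_2$ as pull-backs of nef classes, $[D_X]$ because the $D_X$ move in a covering family with no curve contained in all members); this hypothesis is genuinely required for the duality argument of Lemma 2.3 to close (without it, a dual basis of curves does not force the $D_i$ to span the nef cone), and the paper omits it. The one point to state explicitly is that the intersection numbers $[D_X]\cdot\gamma_i$ are independent of the choice of general $X$ and of the form $g$, since you choose these after fixing the curves; as you note, this follows because all general $\Sigma_m(F_\bullet)$ are translates under $GL_n$ and hence algebraically equivalent.
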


\begin{proof}
In Lemma 3.4, we find a curve $\gamma$ in $\mathcal{M}_1$ satisfying 
\begin{center}
$(\pi_1 \circ \varphi)^*(c_1) \cdot \gamma =0$

$(\pi_2 \circ \varphi)^*(c_2) \cdot \gamma =0$

$[D_X]\cdot \gamma =1$.
\end{center}
If we can find curves $\gamma'$ and $\gamma''$ in $\mathcal{M}_1$ satisfying

\begin{center}
\begin{table}[h]
\begin{tabular}{|c||c|c|c|}
\hline
$\cdot$ & $\gamma$ & $\gamma'$ & $\gamma''$ \\
\hline\hline
$[D_X]$ & 1 & 0 & 0 \\
\hline
$(\pi_1 \circ \varphi)^*(c_1)$ & 0 & 1 & 0 \\
\hline
$(\pi_2 \circ \varphi)^*(c_2)$ & 0 & 0 & 1 \\
\hline
\end{tabular}
\end{table}
\end{center}
then by using \textit{lemma 2.3}, we can complete the proof.\\

Two Schubert divisors in $F(k-1,k+m;n)$ are \\
(1) flags $(W_1 \subset W_2)$ where $W_1$ intersects a fixed $(n-k+1)$-dimensional subspace \\
(2) flags $(W_1 \subset W_2)$ where $W_2$ intersects a fixed $(n-k-m)$-dimensional subspace \\

In $F(k-1,k+m;n)$, the dual Schubert curves are given by \\
(1) flags $(W_1 \subset W_2)$ where $W_2$ is fixed and $W_1$ varies in a pencil contained in $W_2$\\
(2) flags $(W_1 \subset W_2)$ where $W_1$ is fixed and $W_2$ varies in a pencil whose base locus $B$ contains $W_1$ \\

Both $\gamma'$ and $\gamma''$ parametrize hypersurfaces of degree $d$ which are $d$-th powers of linear spaces in these pencils of $\mathbb{P}^m$ whose support does not change. In both dual Schubert curves (1) and (2), the $\mathbb{P}^m$'s have a common $\mathbb{P}^{m-1}$, because each curve is a family of $\mathbb{P}^m$'s rotating around a $\mathbb{P}^{m-1}$. Therefore, by taking $d$-th power of such $\mathbb{P}^{m-1}$, curves (1) and (2) induce $\gamma'$ and $\gamma''$ respectively and satisfy the condition above.

\end{proof}

\begin{corollary}
$Nef(\mathcal{M}_2)$ is generated by pull-back of the $NS(F(k-m,k+1;n))$ and $[D_Y]$.
\end{corollary}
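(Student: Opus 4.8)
The plan is to repeat the development of this section with $\mathcal{M}_1$ replaced by $\mathcal{M}_2$, the flag variety $F(k-1,k+m;n)$ replaced by $F(k-m,k+1;n)$, and the special Schubert variety $X=\Sigma_m(F_\bullet)$ replaced by $Y=\Sigma_{1,\dots,1}(F_\bullet)$. As recalled above, $\mathcal{M}_2=\mathbb{P}(Sym^d\mathcal{S}^*)$ is a projective bundle over $F(k-m,k+1;n)$ whose fibres are the $\mathbb{P}^m$'s of $k$-planes $V$ with $V_1\subseteq V\subseteq V_2$, and since $d\ge 3$ every degree $d$, $(m-1)$-fold has a unique such span \cite{3}. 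Write $\rho_1\colon F(k-m,k+1;n)\to G(k-m,n)$ and $\rho_2\colon F(k-m,k+1;n)\to G(k+1,n)$ for the two projections and $c_1,c_2$ for the first Schubert classes of these Grassmannians; by Theorems 2.1 and 2.2 the pull-backs of $\rho_1^*c_1$ and $\rho_2^*c_2$ to $\mathcal{M}_2$ are two independent generators of the rank-$3$ group $NS(\mathcal{M}_2)$. So by Lemma 2.3 it suffices to prove (i) that $D_Y$ is a divisor whose class is independent from these two, and (ii) that there are curves $\gamma,\gamma',\gamma''\subset\mathcal{M}_2$ forming a basis dual to $[D_Y],\rho_1^*c_1,\rho_2^*c_2$.

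For (i), the one input that must be re-checked is the Schubert-calculus fact used in the proof of Lemma 3.2. Inside the $k\times(n-k)$ rectangle, the class $\sigma_{1,\dots,1}$ of $Y$ (partition $(1^m,0^{k-m})$) is exactly complementary to the class $\sigma_{n-k,\dots,n-k,n-k-1,\dots,n-k-1}$ (partition $((n-k)^{k-m},(n-k-1)^m)$) of the $\mathbb{P}^m$'s parametrized by $F(k-m,k+1;n)$; hence a general $Y$ meets a general such $\mathbb{P}^m$ in a single reduced point, and on the special $\mathbb{P}^m$'s the intersection with $Y$ is a positive-dimensional linear subspace, so it meets every degree $d$ hypersurface. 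With this, the incidence-correspondence argument of Lemma 3.2 applies verbatim and shows $D_Y=\{C\in\mathcal{M}_2 \mid C\cap Y\neq\emptyset\}$ is a codimension-$1$ subvariety of $\mathcal{M}_2$; the independence of $[D_Y]$ from $\rho_1^*c_1,\rho_2^*c_2$ then follows exactly as in Theorem 3.3 (recorded as Corollary 3.5) once the dual curve $\gamma$ of (ii) is produced.

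For (ii), I mimic Lemma 3.4 and Theorem 3.6. For $\gamma$: choose a general pair $(V_1,V_2)\in F(k-m,k+1;n)$ so that the corresponding $\mathbb{P}^m$ avoids the two Schubert loci cutting out $\rho_1^*c_1$ and $\rho_2^*c_2$ while meeting $Y$ in a single point $W$, pick degree $d$ forms $f,g$ on that $\mathbb{P}^m$ with $f(W)=0\neq g(W)$, and let $\gamma$ be the pencil $\{sf+tg\}$; then $\gamma$ is disjoint from the restrictions of $\mathcal{M}_2$ over those Schubert loci and meets $D_Y$ in exactly one point. For $\gamma'$ and $\gamma''$: the dual Schubert curves in $F(k-m,k+1;n)$ are one-parameter families of $\mathbb{P}^m$'s rotating around a fixed $\mathbb{P}^{m-1}\subset G(k,n)$, so the $d$-th power of that $\mathbb{P}^{m-1}$ defines curves $\gamma',\gamma''\subset\mathcal{M}_2$ lying over those Schubert curves, disjoint from a general $D_Y$, and pairing to $1$ with $\rho_1^*c_1$ and $\rho_2^*c_2$ respectively. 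Feeding the resulting dual triple into Lemma 2.3 gives the statement. The main obstacle is the bookkeeping in the last two paragraphs: verifying the complementary-partition identity, and, more delicately, checking that the two projections of $F(k-m,k+1;n)$ — whose fibre direction is dual to that of $F(k-1,k+m;n)$ — still correspond to pencils of $\mathbb{P}^m$'s through a common $\mathbb{P}^{m-1}$, so that the construction of $\gamma'$ and $\gamma''$ carries over without change.
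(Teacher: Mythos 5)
Your proposal is correct and follows the same route the paper takes: the paper proves everything for $\mathcal{M}_1$ and obtains the $\mathcal{M}_2$ statement by the symmetry between the two components, which is exactly the substitution $F(k-1,k+m;n)\leftrightarrow F(k-m,k+1;n)$, $X\leftrightarrow Y$ that you carry out. Your explicit check that the partition $(1^m)$ is complementary in the $k\times(n-k)$ rectangle to $((n-k)^{k-m},(n-k-1)^m)$ is the one input the paper leaves implicit, and it is verified correctly.
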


\begin{remark}
If $k+m=n$, then $F(k-1,k+m;n) \cong G(k-1,n)$ and the rank of $NS(\mathcal{M}_1)=2$. In this case, there is no meaning for the projection map $\pi_2 : F(k-1,k+m;n) \to G(k+m,n)$ and we can't construct $\gamma''$. But, with the same method above, the Nef cone $Nef(\mathcal{M}_1)$ is a 2-dimensional cone spanned by $(\pi_1 \circ \varphi)^* (c_1)$ and $[D_X]$.
\end{remark}

The Hilbert scheme $Hilb_{P_{d,m}(T)}(G(k,n))$ is a disjoint union $\mathcal{M}_1$ and $\mathcal{M}_2$, and we computed the Nef cone of each components in theorem 3.6 and corollary 3.6.1. Therefore, if we have following diagram :
\begin{center}
\begin{tikzpicture}[every node/.style={midway}]
        \matrix[column sep={12em,between origins}, row sep={3em}] at (0,0) {
\node(11) {$\mathcal{M}_2$};&\node(12) {$\mathbb{P}(Sym^d \mathcal{S}^*)$};&\node(13) {$ $};\\
\node(21) {$ $};&\node(22) {$F(k-m,k+1;n)$};&\node(23) {$ $};\\
\node(31) {$G(k-m,n)$};&\node(32) {$ $};&\node(33) {$G(k+1,n)$};\\
\node(41) {$\Sigma_1(F_{\bullet}^{(2,k-m)})$};&\node(42) {$ $};&\node(43) {$\Sigma_1(F_{\bullet}^{(2,k+1)})$};\\
        };
        %\draw[right hook->] (11) -- (B1) node[left]{$j_A$};
        %\draw[left hook->] (13) -- (12) node[above]{$i_B$};
        %\draw[->] (22) -- (33) node[above]{$\pi_{a'}$};
        %\draw[dashed,->] (21) -- (22) node[below]{$\sigma_A$};
        \draw[->] (12) -- (22) node[right]{$\varphi'$};
        \draw[<->] (11) -- (12) node[above]{$\cong$};
        \draw[->] (22) -- (31) node[above]{$\pi'_1$};
        \draw[->] (22) -- (33) node[above]{$\pi'_2$};
        \draw[right hook->] (41) -- (31);
        \draw[right hook->] (43) -- (33);
\end{tikzpicture}

where $F_{\bullet}^{(2,k-m)}$ and $F_{\bullet}^{(2,k+1)}$ are complete flags in $\mathbb{C}^n$,
\end{center}
and if we denote 
\begin{center}
$c'_1 :=[\Sigma_1(F_{\bullet}^{(2,k-m)})]$ and $c'_2 :=[\Sigma_1(F_{\bullet}^{(2,k+1)})]$,
\end{center}
then we get the following result.

\begin{theorem}
The Nef cone of the Hilbert scheme $Hilb_{P_{d,m}(T)}(G(k,n))$ is a cone spanned by following classes : 

(i) $k+m<n$ and $k-m>0$ case : 6 classes \\
$\implies$ $(\pi_1 \circ \varphi)^*(c_1)$, $(\pi_2 \circ \varphi)^*(c_2)$,$[D_X]$, $(\pi'_1 \circ \varphi')^*(c'_1)$, $(\pi'_2 \circ \varphi')^*(c'_2)$, and $[D_Y]$

(ii) $k+m=n$ and $k-m>0$ case : 5 classes \\
$\implies$ $(\pi_1 \circ \varphi)^*(c_1)$, $(\pi'_1 \circ \varphi')^*(c'_1)$, $(\pi'_2 \circ \varphi')^*(c'_2)$, and $[D_Y]$

(iii) $k+m<n$ and $k-m=0$ case : 5 classes \\
$\implies$ $(\pi_1 \circ \varphi)^*(c_1)$, $(\pi_2 \circ \varphi)^*(c_2)$,$[D_X]$, $(\pi'_2 \circ \varphi')^*(c'_2)$, and $[D_Y]$

(iii) $k+m=n$ and $k-m=0$ case : 4 classes \\
$\implies$ $(\pi_1 \circ \varphi)^*(c_1)$,$[D_X]$, $(\pi'_2 \circ \varphi')^*(c'_2)$, and $[D_Y]$
\end{theorem}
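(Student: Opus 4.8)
The plan is to assemble the final statement directly from the component results already established for $\mathcal{M}_1$ and $\mathcal{M}_2$, together with the elementary observation that the Nef cone of a disjoint union is the product (equivalently, the direct sum) of the Nef cones of the components. First I would note that $Hilb_{P_{d,m}(T)}(G(k,n)) = \mathcal{M}_1 \sqcup \mathcal{M}_2$ by Theorem 1.1, so $NS(Hilb) = NS(\mathcal{M}_1) \oplus NS(\mathcal{M}_2)$ and a divisor class is nef on the Hilbert scheme if and only if its restriction to each component is nef; hence $Nef(Hilb) = Nef(\mathcal{M}_1) \times Nef(\mathcal{M}_2)$ as a cone in the product vector space. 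This reduces the theorem to quoting the descriptions of $Nef(\mathcal{M}_1)$ and $Nef(\mathcal{M}_2)$.

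Next I would invoke Theorem 3.6 (the general case $k+m<n$, $k-m>0$): $Nef(\mathcal{M}_1)$ is spanned by $(\pi_1\circ\varphi)^*(c_1)$, $(\pi_2\circ\varphi)^*(c_2)$, and $[D_X]$, with the dual curves $\gamma,\gamma',\gamma''$ of Lemma 3.4 and Theorem 3.6 witnessing that these three classes span (not just contain) the Nef cone via Lemma 2.3. Symmetrically, by Corollary 3.6.1 applied to $\mathcal{M}_2$ over $F(k-m,k+1;n)$, $Nef(\mathcal{M}_2)$ is spanned by $(\pi'_1\circ\varphi')^*(c'_1)$, $(\pi'_2\circ\varphi')^*(c'_2)$, and $[D_Y]$. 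Taking the direct sum gives case (i): the $6$ listed classes span $Nef(Hilb)$, and since the respective dual curves have vanishing intersection with all classes coming from the other component, the $6\times 6$ intersection matrix against $\{\gamma,\gamma',\gamma'',\gamma_2,\gamma'_2,\gamma''_2\}$ is the identity, so Lemma 2.3 applies verbatim on the full Hilbert scheme.

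Finally I would dispose of the degenerate cases (ii)--(iv) by the remarks already recorded: when $k+m=n$ the flag variety $F(k-1,k+m;n)$ collapses to $G(k-1,n)$, the projection $\pi_2$ is vacuous, $\gamma''$ cannot be formed, and by Remark 3.7 $Nef(\mathcal{M}_1)$ drops to the $2$-dimensional cone spanned by $(\pi_1\circ\varphi)^*(c_1)$ and $[D_X]$; likewise when $k-m=0$ the flag variety $F(k-m,k+1;n)$ collapses to $G(k+1,n)$, the projection $\pi'_1$ is vacuous, and $Nef(\mathcal{M}_2)$ drops to the cone spanned by $(\pi'_2\circ\varphi')^*(c'_2)$ and $[D_Y]$. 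Combining these reductions componentwise yields exactly the $5$, $5$, and $4$ generators listed in (ii), (iii), and (iv) respectively, and in each subcase the surviving dual curves still form an identity intersection matrix, so Lemma 2.3 again certifies that the listed classes span the Nef cone.

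The main obstacle is not in this assembly step — which is essentially bookkeeping — but lies in making precise the claim, used implicitly throughout, that for a disjoint union $Nef$ decomposes as a product and that a class spanning each factor's cone spans the total cone; this requires checking that an effective curve on $Hilb$ lives entirely in one component (true since the components are connected and open-and-closed), so that $\overline{NE}(Hilb) = \overline{NE}(\mathcal{M}_1) \oplus \overline{NE}(\mathcal{M}_2)$ and dualizing is compatible with the direct sum. Once that is stated, the theorem follows immediately from Theorem 3.6, Corollary 3.6.1, and Remark 3.7.
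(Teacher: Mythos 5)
Your proposal is correct and takes essentially the same route as the paper, which likewise obtains this theorem by decomposing the Hilbert scheme as $\mathcal{M}_1 \sqcup \mathcal{M}_2$ and quoting Theorem 3.6, Corollary 3.6.1, and Remark 3.7 for the components. If anything you are more careful than the paper, which merely asserts that the Nef cone of the disjoint union is spanned by the component Nef cones; your observation that every irreducible curve lies in a single connected component, so that $\overline{NE}$ and hence $Nef$ decompose as a direct sum and a product of cones respectively, supplies the justification the paper leaves implicit.
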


\bibliographystyle{amsref}

\end{document}